\documentclass[11pt, oneside]{amsart}

\usepackage{tabularx, hyperref}
\usepackage{amssymb} \usepackage{amsfonts} \usepackage{amsmath}
\usepackage{amsthm} \usepackage{epsfig, subfig}
\usepackage{ amscd, amsxtra, latexsym}
\usepackage[all]{xy}
\usepackage{caption}
\usepackage{enumerate}
\usepackage{color}

\addtolength{\captionmargin}{1cm}

\newtheorem{lemma}{Lemma}[section]
\newtheorem{thm}[lemma]{Theorem}
\newtheorem{prop}[lemma]{Proposition}
\newtheorem{cor}[lemma]{Corollary}

\theoremstyle{definition}
\newtheorem{defn}[lemma]{Definition}

\newtheorem{rem}[lemma]{Remark}
\newtheorem{conv}[lemma]{Convention}
\theoremstyle{definition}

\long\def\comment#1\endcomment{}

\newcommand{\N}{\ensuremath {\mathbb{N}}}
\newcommand{\R} {\ensuremath {\mathbb{R}}}

\newcommand{\matP} {\ensuremath {\mathbb{P}}}

\newcommand{\matE} {\ensuremath {\mathbb{E}}}

\newcommand{\calC} {\ensuremath {\mathcal{C}}}

\newcommand{\calP} {\ensuremath {\mathcal{P}}}

\newcommand{\calM} {\ensuremath {\mathcal{M}}}
\newcommand{\calH} {\ensuremath {\mathcal{H}}}

\newcommand{\calZ} {\ensuremath {\mathcal{Z}}}

\definecolor{darkgreen}{cmyk}{1,0,1,.2}

\author{Alessandro Sisto}
\address{Mathematical Institute, 24-29 St Giles', Oxford OX1 3LB, United Kingdom}
\email{sisto@maths.ox.ac.uk}
\title{Tracking rates of random walks}
\thanks{The author was funded by the EPSRC grant "Geometric and analytic aspects of infinite groups".}

\begin{document}
\begin{abstract}
 We show that simple random walks on (non-trivial) relatively hyperbolic groups stay $O(\log(n))$-close to geodesics, where $n$ is the number of steps of the walk. Using similar techniques we show that simple random walks in mapping class groups stay $O(\sqrt{n\log(n)})$-close to geodesics and hierarchy paths. Along the way, we also prove a refinement of the result that mapping class groups have quadratic divergence.

An application of our theorem for relatively hyperbolic groups is that random triangles in non-trivial relatively hyperbolic groups are $O(\log(n))$-thin, random points have $O(\log(n))$-small Gromov product and that in many cases the average Dehn function is subasymptotic to the Dehn function.
\end{abstract}

\maketitle

\section{Introduction}
It is known in several contexts that sample paths of random walks stay sublinearly close to geodesics \cite{FurKen-trkglnr,Ka-sym,Ka-entr,Ka-hyp,KaMa-unconv,Ti}. Such a property is useful, for example, to describe the Poisson boundary \cite{Ka-entr}.

It seems that little is known about estimates on the tracking rates, i.e. the actual expected value of the Hausdorff distance between a random path and a corresponding geodesic. The tracking rate in non-abelian free groups \cite{Led} and more generally non-elementary hyperbolic groups \cite[Corollary 3.9]{BHM-meas} is logarithmic in the length of the walk. Our first main result is that the logarithmic rate holds for a more general class of groups, and we will show it with entirely different, more geometric, methods than \cite{Led} and \cite{BHM-meas}. We say that a relatively hyperbolic group is non-trivial if it is not virtually cyclic and all peripheral subgroups have infinite index. We denote the Hausdorff distance by $d_{Haus}$.

\begin{thm}
\label{logtrkintro}
Let $X_n$ be a simple random walk on the non-trivial relatively hyperbolic group $G$. There exists $C$ so that for each $n\geq 2$ we have
$$\mathbb{E}\left[\sup_{[1,X_n]} d_{Haus}(\{X_i\}_{i\leq n}, [1,X_n])\right]\leq C\log (n),$$
where the supremum is taken over all geodesics $[1,X_n]$ from $1$ to $X_n$.
\end{thm}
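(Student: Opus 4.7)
The plan is to reduce the statement to a pointwise exponential tail bound on the deviation of each $X_i$, and to establish that tail bound using the coned-off Cayley graph together with the projections to peripheral cosets.

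First I would prove that there exist constants $C_1, C_2 > 0$ such that, for every $i \le n$ and $d \ge 0$,
\[
\mathbb{P}\Big[\sup_{[1,X_n]} d(X_i, [1,X_n]) \ge d\Big] \le C_1 e^{-d/C_2},
\]
and similarly that every point on any geodesic $[1, X_n]$ lies within $d$ of some $X_j$ with probability at least $1 - C_1 e^{-d/C_2}$. Granted these bounds, a union bound over the $n+1$ walk positions and the $O(n)$ macroscopic segments of the geodesic, followed by layer-cake integration of the tails, yields the desired estimate, since $\int_0^\infty \min(1, n C_1 e^{-d/C_2})\, dd = O(\log n)$.

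For the tail bound itself I would decompose the deviation via the coned-off Cayley graph $\hat G$, which is Gromov hyperbolic. If $d(X_i, [1,X_n]) \ge d$ in $G$, then up to a constant $K = K(G)$ either (a) $\hat d(X_i, [1,X_n]) \ge d/K$ in $\hat G$, or (b) $X_i$ lies in a peripheral coset $gP$ and $d(\pi_{gP}(X_i), \{\pi_{gP}(1), \pi_{gP}(X_n)\}) \ge d/K$, where $\pi_{gP}$ is the coarse nearest-point projection to $gP$. For (a), the push-forward of the walk to $\hat G$ has positive drift, by non-triviality of the peripheral structure, so exponential deviation from geodesics follows from standard random-walk estimates on hyperbolic spaces with positive drift (Azuma-type concentration combined with hyperbolicity of $\hat G$). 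For (b), the contraction and bounded-geodesic-image properties of $\pi_{gP}$ imply that the increments of $\pi_{gP}(X_m)$ have uniformly bounded expectation with exponential tails, giving the required exponential decay for $d(\pi_{gP}(X_i), \{\pi_{gP}(1), \pi_{gP}(X_n)\})$. The reverse direction (every geodesic point close to the walk) is handled by a relative Morse lemma: the walk and any geodesic with matching endpoints are both quasi-geodesics in $\hat G$, so any deep excursion of the geodesic into a peripheral coset is matched by a deep excursion of the walk, itself controlled by (b).

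The main obstacle is the supremum over all geodesics. In a relatively hyperbolic group, distinct geodesics with the same endpoints can route through different peripheral cosets, so tracking a single geodesic does not automatically track all of them. The resolution is that all such geodesics enter and exit a common finite list of deep peripheral cosets, up to bounded error, and outside those cosets stay in a bounded neighborhood of each other; thus the sup over geodesics reduces to the sup over a reference geodesic plus a sup over the finitely many deep cosets that geodesic traverses. Formalising this reduction, and uniformly handling the possibly $\Theta(n)$-many peripheral cosets that a geodesic of length $\sim n$ can pass through, is the technical heart of the argument and where the relatively hyperbolic machinery does the real work.
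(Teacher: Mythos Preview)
Your approach is genuinely different from the paper's, and it has real gaps.

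The paper does \emph{not} try to establish a uniform exponential tail $\mathbb{P}[d(X_i,[1,X_n])\geq d]\leq C_1e^{-d/C_2}$ and then union-bound. Instead it works with the transient/deep decomposition of geodesics: Lemma~\ref{div} shows \emph{deterministically} that every transient point of $[1,X_n]$ is within $O(\log n)$ of the sample path (this is the exponential-divergence argument, applied to an arbitrary path, not a quasi-geodesic); Lemma~\ref{smallproj} shows that with probability $1-O(n^{-k})$ all deep components have length $O(\log n)$; and then a linear-progress argument (Lemma~\ref{linsubprog}) pins each $X_j$ near the transient set by finding $i_0<j<i_1$ with $X_{i_0},X_{i_1}$ close in $G$ and bounding $i_1-i_0$. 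No individual exponential tail on $d(X_i,[1,X_n])$ is ever proved or needed.

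Two concrete problems with your sketch. First, your reverse direction is wrong as written: the sample path $\{X_i\}$ is \emph{not} a quasi-geodesic in $\hat G$ (it can backtrack arbitrarily), so a relative Morse lemma does not apply. The paper's substitute is Lemma~\ref{div}, which holds for \emph{any} path of length $\leq n$ and gives the $\log n$ bound directly from the Relative Rips condition; this is precisely why the divergence argument, not a Morse lemma, is the right tool here. Second, your dichotomy (a)/(b) and the justification for (b) are imprecise. There is no reason $X_i$ should lie in a peripheral coset; the relevant cosets are those met by geodesics from $X_i$ to $[1,X_n]$, and there are potentially many of them. Saying that the increments of $\pi_{gP}(X_m)$ have bounded tails does not by itself give exponential decay of $d(\pi_{gP}(X_i),\{\pi_{gP}(1),\pi_{gP}(X_n)\})$; what you actually need is the content of Lemma~\ref{smallrndprj} (exponential decay of $d_H(1,X_n)$ for each fixed $H$, proved via the Behrstock-type inequality of Corollary~\ref{B-type}), together with a way to sum over the a priori unbounded set of relevant $H$'s, which is exactly the counting step (2) in the proof of Lemma~\ref{smallproj}. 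Your ``Azuma plus hyperbolicity'' claim for (a) is also doing a lot of unacknowledged work: exponential deviation of random walks from geodesics in hyperbolic spaces is essentially \cite{BHM-meas}, not an Azuma consequence, and on the non-proper coned-off graph it requires care.
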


Notice in particular that the expected Hausdorff distance between two geodesics from $1$ to $X_n$ is at most logarithmic.

Sublinear tracking has been shown in \cite{Ka-hyp} for hyperbolic groups and very recently in \cite{Ti} for relatively hyperbolic groups (in both cases for random walks of finite first moment).

We will actually show a more general result, Theorem \ref{logtrk}, which allows more general random walks, gives a polynomial decay of the probability that a sample path gives an ``off-range'' Hausdorff distance and deals with group actions on relatively hyperbolic spaces instead of relatively hyperbolic groups. The motivation for looking at such actions is on one hand that we will apply Theorem \ref{logtrk} to the action of a mapping class group on the corresponding curve complex, and on the other that such group actions are very much related to the notion of hyperbolically embedded subgroups as defined in \cite{DGO}, see \cite[Theorem 4.42]{DGO} and \cite[Theorem 6.4]{Si-metrrh}. Namely, any hyperbolically embedded subgroup gives an action of the ambient group on a relatively hyperbolic space (a Cayley graph with respect to a possibly infinite generating system), and viceversa a nice action on a relatively hyperbolic space gives hyperbolically embedded subgroups.

Our next result is that sublinear tracking holds in mapping class groups as well. Recall that the complexity of a surface of finite type is $3g+p-3$ where $g$ is the genus of the surface and $p$ the number of punctures.

\begin{thm}
Let $S$ be a connected, orientable surface $S$ of finite type, with empty boundary and complexity at least 2.
 Let $\calM(S)$ be its mapping class group and let $\{X_n\}$ be a simple random walk on $\calM(S)$. Then
$$\matE\left[\sup_{\gamma(X_n)} d_{Haus}(\{X_i\}_{i\leq n}, \gamma(X_n))\right] =O(\sqrt{n\log(n)}),$$
where the supremum is taken over all geodesics in a given word metric and hierarchy paths $\gamma(X_n)$ from $1$ to $X_n$.
\end{thm}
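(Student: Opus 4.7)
The plan is to combine two ingredients: logarithmic tracking in the curve complex, obtained by applying Theorem \ref{logtrk} to the action of $\calM(S)$ on the curve complex $\calC(S)$, together with the refined quadratic divergence of $\calM(S)$ announced in the abstract. Since $\calC(S)$ is Gromov hyperbolic (a special case of a relatively hyperbolic space) and the action of $\calM(S)$ on $\calC(S)$ has pseudo-Anosov loxodromics, the general form of Theorem \ref{logtrk} applies to this setting and yields that, with polynomially decaying tail, each $X_i$ stays within $\calC(S)$-distance $O(\log n)$ of any hierarchy path $\gamma$ from $1$ to $X_n$. Here one uses that hierarchy paths project to unparametrised quasigeodesics in $\calC(S)$, so tracking by curve-complex geodesics is equivalent to tracking by hierarchy paths in the curve-complex metric.

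The main step is to promote this curve-complex tracking to tracking in the word metric on $\calM(S)$. Suppose $X_i$ lies at word-distance $R$ from $\gamma$. I would consider the maximal excursion of the random walk around $X_i$ which stays at word-distance at least $R/2$ from $\gamma$. This sub-walk has length at most $n$ (as the whole walk has at most $n$ bounded-length steps), yet it connects two points close to $\gamma$ while detouring through a point at distance $R$ from $\gamma$, and hence avoiding the $R/2$-neighbourhood of $\gamma$. The refined quadratic divergence of $\calM(S)$ forces such a detour to have length $\gtrsim R^2$, so $R = O(\sqrt{n})$. Essentially the same argument handles both word-metric geodesics and hierarchy paths, and the symmetric direction of the Hausdorff distance (every point of $\gamma$ is close to some $X_i$) is handled along the same lines once one knows that the walk makes uniform progress along $\gamma$ in $\calC(S)$.

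Converting this bound into the expectation statement $O(\sqrt{n\log n})$ would then proceed by combining the polynomial tail estimates from Theorem \ref{logtrk} with a union bound over the $n$ positions of the walk; the extra $\sqrt{\log n}$ factor is the standard price paid to make the union bound summable and to integrate the resulting tail against the expectation. The main obstacle, and the heart of the argument, is the refined divergence step itself: classical quadratic divergence bounds detours avoiding a single ball, whereas here one needs a statement that controls detours avoiding a neighbourhood of a full hierarchy path, and one must accommodate the fact that the ``detour'' is a random walk trajectory rather than a true geodesic. This is precisely what the announced refinement of the quadratic divergence of $\calM(S)$ must provide, and establishing it in a form strong enough to give the $\sqrt{n}$ scaling (as opposed to a weaker polynomial bound) is where the bulk of the technical work in the proof is expected to reside.
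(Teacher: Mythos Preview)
Your overall architecture is right---curve-complex tracking via Theorem~\ref{logtrk}, a quantitative divergence estimate, and linear progress---but the way you account for the $\sqrt{\log n}$ factor reveals a genuine gap.

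The $\sqrt{\log n}$ does \emph{not} come from a union bound over positions of the walk. In the paper the refined divergence statement is Proposition~\ref{prop:quadr}: for a $D$-bounded pair $x_1,x_2$ and any path $\alpha$ joining them, every point of $[x_1,x_2]$ satisfies $d(p,\alpha)\leq C\sqrt{D\,l(\alpha)}$. The constant depends on $D$, and this is unavoidable: there is no uniform inequality $l(\alpha)\gtrsim R^2$ for detours in $\calM(S)$, because the divergence constant along a pseudo-Anosov axis genuinely grows with the boundedness parameter of the axis. Your sentence ``quadratic divergence forces such a detour to have length $\gtrsim R^2$, so $R=O(\sqrt{n})$'' is therefore not justified as stated; if it were, you would have proved an $O(\sqrt{n})$ bound, stronger than what the paper obtains.

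The missing step is Lemma~\ref{smallprojmcg}: one first shows that the pair $(1,X_n)$ is $O(\log n)$-bounded with polynomially small failure probability. This is where the curve-complex tracking from Theorem~\ref{logtrk} is actually used, together with the Bounded Geodesic Image Theorem, to control \emph{all} proper subsurface projections simultaneously (not just to place $X_i$ near $\gamma$ in the $\calC(S)$-metric). Once $D=O(\log n)$ is established, one applies Proposition~\ref{prop:quadr} directly with $\alpha$ the entire sample path (length $O(n)$), obtaining $d(p,\{X_i\})\leq C\sqrt{D\cdot n}=O(\sqrt{n\log n})$ for every $p\in[1,X_n]$. The reverse Hausdorff inequality then follows, as you suggest, from linear progress. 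So the direction ``points of $\gamma$ are close to the walk'' is the direct consequence of divergence, not the harder one, and no excursion argument around a neighbourhood of $\gamma$ is needed.
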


Once again, given any choice of a pair of hierarchy paths or geodesics from $1$ to $X_n$ the expected value of their Hausdorff distance is $O(\sqrt{n\log(n)})$.

We remark that not even sublinear tracking seems to appear in the literature. Once again, we will show a stronger result (Theorem \ref{trkmcg}). It is quite possible that the same techniques we will use to show the theorem apply in other contexts as well. In fact, we will use machinery (most notably the Distance Formula) that is currently available for mapping class groups only but should have analogues for other groups. Indeed, several results about the geometry of mapping class groups have been very recently extended to right-angled Artin groups in \cite{KK-raagvsmcg} (but an improved version of the Distance Formula contained in that paper would be needed for our proofs to carry over).

In order to prove the theorem we will need a refinement, which may be of independent interest, of the result that mapping class groups have (at least) quadratic divergence \cite{Be-th, DR-divteich}. (Recall that the divergence is, roughly speaking, the minimal length of paths avoiding a ball as a function of the diameter of the ball, but we will not need the exact definition. The interested reader is referred to \cite{DMS-div}.)

The $D$-bounded pairs appearing in the Proposition below will be defined in Subsection \ref{divsec}, for the moment we will just mention that all pairs of points on the orbit of a pseudo-Anosov $g$ are $D$-bounded, but $D$ depends on the choice of $g$.

\begin{prop}
\label{prop:quadr}
Let $S$ be a connected, orientable surface $S$ of finite type, with empty boundary and complexity at least 2.
 Let $\calM(S)$ be its mapping class group. Then there is a constant $C=C(S)$ so that the following holds. Let $D\geq 1$ and let $x_1,x_2\in \calM(S)$ be a $D$-bounded pair. Then for any path $\alpha$ of length at least $1$ from $x_1$ to $x_2$ and any $p\in [x_1,x_2]$ we have
$$d(p,\alpha)\leq C \sqrt{Dl(\alpha)},$$
where $[x_1,x_2]$ can denote either a hierarchy path or a geodesic in a given word metric from $x_1$ to $x_2$.
\end{prop}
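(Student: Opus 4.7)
The plan is to establish a scaled contracting-projection property for $[x_1,x_2]$ and then deduce the proposition by the standard argument that extracts super-linear divergence lower bounds from such contraction.

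\emph{Stage 1 (scaled contraction).} Let $\pi : \calM(S) \to [x_1, x_2]$ be a closest-point projection. I aim to show that there exist constants $c_0, c_1$ depending only on $S$ so that whenever $d(y, y') \leq c_0 \, d(y, [x_1, x_2])$, one has $d(\pi(y), \pi(y')) \leq c_1 D$. The mechanism: by consistency, every proper subsurface projection $d_Y(q, q')$ between points $q, q' \in [x_1, x_2]$ is bounded by $D + O(1)$, using the $D$-bounded hypothesis. Choosing a Masur--Minsky Distance Formula threshold $K$ exceeding $D+O(1)$ forces proper subsurfaces to contribute nothing, so distances along $[x_1, x_2]$ are coarsely the $\calC(S)$-projection distances, modulo an additive $O(D)$ coming from the threshold. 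Since $\calC(S)$ is Gromov hyperbolic and $\pi_S([x_1, x_2])$ is a quasi-geodesic, closest-point projection in $\calC(S)$ to it is contracting with universal constants, and transferring this back to $\calM(S)$ via the Distance Formula yields the claimed $D$-scaled contraction.

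\emph{Stage 2 (divergence from contraction).} Set $R := d(p, \alpha)$. We may assume $R$ exceeds any fixed constant, and $d(p, x_i) \geq R$ for $i = 1, 2$ holds automatically since $x_i \in \alpha$. Partition $\alpha$ into $N \lesssim \ell(\alpha)/R$ consecutive pieces of length at most $c_0 R / 2$, with vertices $x_1 = y_0, \ldots, y_N = x_2$, and set $q_i := \pi(y_i)$. For each $i$ with $d(y_i, [x_1, x_2]) \geq R/2$, Stage 1 bounds $d(q_i, q_{i+1}) \leq c_1 D$. For each $i$ with $d(y_i, [x_1, x_2]) < R/2$, the inequality $d(y_i, p) \geq R$ forces $q_i$ to lie at distance $> R/2$ from $p$ along $[x_1, x_2]$, confining it to one of the two tails away from $p$. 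The cumulative displacement of the $q_i$ must be at least $d(x_1, x_2) \gtrsim R$ by the assumption on $d(p, x_i)$, while it is bounded by the sum of $c_1 D$ over contracting steps, plus a correction from the ``close'' excursions: each such excursion forces $\Omega(R)$ of $\alpha$-length in order to enter and leave the $R/2$-neighborhood of $[x_1, x_2]$. After bookkeeping, one concludes $\ell(\alpha) \gtrsim R^2/D$, which rearranges to $d(p, \alpha) \leq C \sqrt{D \ell(\alpha)}$.

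The main obstacle is Stage 1. Beyond the subsurface-projection bookkeeping, the delicate point is that proper subsurfaces $Y$ whose boundaries project far from $\pi_S([x_1, x_2])$ in $\calC(S)$ could in principle supply uncontrolled contributions to $d(\pi(y), \pi(y'))$; the bounded geodesic image theorem neutralizes this by forcing the $\pi_Y$-projections to be coarsely constant along the relevant paths, so only subsurfaces whose boundaries are actually traversed contribute, and for those the $D$-bounded hypothesis applies. Stage 2 is then the standard route from a contracting projection to a super-linear divergence bound, with the extra care needed to keep track of the scaling with $D$ arising from the slack in the Distance Formula threshold.
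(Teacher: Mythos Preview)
Your overall strategy—prove a $D$-scaled contracting property for closest-point projection onto $[x_1,x_2]$ and then run the standard contraction-to-divergence argument—is in the right spirit, but Stage~1 as written has a genuine gap. The step ``choose a Distance Formula threshold $K$ exceeding $D+O(1)$ \dots\ modulo an additive $O(D)$ coming from the threshold'' is exactly the move the paper warns cannot be made: the multiplicative and additive constants in the Distance Formula depend on the threshold, and they are \emph{not} known to be linear in it (cf.\ the remark just before the paper's proof and the polynomial bound in Lemma~\ref{threshold}). With threshold $\approx D$ you therefore get a comparison between $d_{\calM(S)}$ and $d_S$ along $[x_1,x_2]$ whose constants blow up super-linearly in $D$, which destroys the $\sqrt{D}$ dependence you want. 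A second, independent gap is the passage from the $\calC(S)$-closest-point projection onto $\pi_S([x_1,x_2])$ to the $\calM(S)$-closest-point projection $\pi$ onto $[x_1,x_2]$: these are different maps, and you have not argued why $\pi_S(\pi(y))$ lands within $O(1)$ (or even $O(D)$) of the $\calC(S)$-projection of $\pi_S(y)$.

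The paper sidesteps both issues by never changing the threshold and never using an $\calM(S)$-closest-point projection. It fixes $L=3L_0$ once and for all and works directly in $\calC(S)$: using the lemma $d_{\calM(S)}(p,q)\leq CE\,d_S(p,q)$ for $E$-bounded pairs (note the \emph{multiplicative} $D$-dependence, which is the correct replacement for your additive $O(D)$), it places $n\gtrsim d/D$ points $y_i$ on a central subsegment $[x'_1,x'_2]$ that are uniformly separated in $\calC(S)$, then finds points $q_i$ on $\alpha$ whose $\calC(S)$-projections onto a geodesic from $\pi_S(x'_1)$ to $\pi_S(x'_2)$ match $\pi_S(y_i)$, and finally shows $d_{\calM(S)}(q_i,q_{i+1})\gtrsim d$ via the Bounded Geodesic Image Theorem and threshold bookkeeping at the fixed level $L$. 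This yields $\ell(\alpha)\geq (n-1)d/C\gtrsim d^2/D$. Your Stage~1 can be repaired, but doing so requires essentially this argument rather than a threshold change; once you have it, your Stage~2 is close to the paper's conclusion.
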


What is shown in \cite{Be-th, DR-divteich} is that for each $D$ there exists $K(D)$ so that $d(p,\alpha)\leq K(D) \sqrt{l(\alpha)}$, so the improvement brought by Proposition \ref{prop:quadr} is to show that $K(D)$ can be chosen to be linear in $\sqrt{D}$. This result can also presumably be obtained using the techniques in \cite[Section 6]{Be-th}. However, our proof, which is inspired by arguments in \cite{KaLe-grphquad, DMS-div}, is different and shorter. The $C\sqrt{D}$ coefficient in front of $\sqrt{l(\alpha)}$ should be optimal in the sense that it should not be possible to replace it by any function in $o(\sqrt{D})$.

Finally, we will give two applications of Theorem \ref{logtrk}. We say that random triangles in a given group are $\omega$-thin, where $\omega:\R^+\to\R^+$, if the expected thinness constant of triangles joining the endpoints of three independent simple random walks is $O(\omega(n))$, where $n$ is the number of steps of both walks.
Also, we say that that random points have $\omega$-small Gromov product if all geodesics joining the endpoints of two independent random walks pass $O(\omega)$-close to $1$.
Finally, given a combing of a group, we define the average Dehn function to be the expected area of loops obtained concatenating the sample path of a simple random walk and the path from the given combing joining the endpoint to the identity. All concepts will be formally defined and discussed in Section \ref{appl}.

\begin{thm}
 Let $G$ be a non-trivial relatively hyperbolic group. Then
\begin{enumerate}
 \item random triangles in $G$ are $\log(n)$-thin.
 \item random points in $G$ have $\log(n)$-small Gromov product.
 \item if all Dehn functions of the peripheral subgroups are bounded by $\delta:\R^+\to\R^+$ and are at most polynomial then for any geodesic combing the average Dehn function is $O(\, n\, \delta(\log(n))\, )$.
\end{enumerate}
\end{thm}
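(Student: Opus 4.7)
The plan is to deduce all three parts from Theorem \ref{logtrkintro} (and its refinement Theorem \ref{logtrk}) via a time-reversal trick. Given two independent sample paths $X=(1,X_1,\ldots,X_n)$ and $Y=(1,Y_1,\ldots,Y_n)$ of simple random walks on $G$, the symmetry of the step distribution implies that the concatenation
$$\bar X\cdot Y\;:=\;(X_n,X_{n-1},\ldots,X_0=1=Y_0,Y_1,\ldots,Y_n)$$
is, after left-translation by $X_n^{-1}$, itself a sample path of a simple random walk of length $2n$ starting at the identity. Applying Theorem \ref{logtrkintro} to the translated walk and pulling back will give $\matE[d_{Haus}(\bar X\cdot Y,[X_n,Y_n])]=O(\log n)$ for any geodesic $[X_n,Y_n]$. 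Part (2) then falls out immediately, since $\bar X\cdot Y$ passes through the identity: Hausdorff tracking forces some point of every geodesic $[X_n,Y_n]$ to lie within $O(\log n)$ of $1$ in expectation, which is exactly the small Gromov product claim.

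For part (1) I would take three independent walks $X,Y,Z$ and apply the argument above to each of the three pairs. A point $p\in[X_n,Y_n]$ lies at distance $O(\log n)$ from some vertex $q\in\bar X\cdot Y$; if $q=X_i$ then $q$ also lies on $\bar X\cdot Z$, which in turn is $O(\log n)$-close to $[X_n,Z_n]$, so $p$ is $O(\log n)$-close to $[X_n,Z_n]$, while the symmetric case $q=Y_j$ places $p$ close to $[Y_n,Z_n]$. Summing the three pairwise tracking bounds over the three sides and taking expectations should give $O(\log n)$-thinness of the triangle with vertices $X_n,Y_n,Z_n$.

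For part (3), let $\alpha=(1,X_1,\ldots,X_n)$ be the sample path, $\gamma$ the combing geodesic from $1$ to $X_n$, and $D:=d_{Haus}(\alpha,\gamma)$, for which Theorem \ref{logtrk} provides $\matE[D]=O(\log n)$ together with polynomial tail decay. For each $i$ I would choose $y_i\in\gamma$ with $d(X_i,y_i)\le D$; the triangle inequality gives $d(y_i,y_{i+1})\le 2D+1$, so connecting each $X_i$ to $y_i$ by a geodesic subdivides the loop $\alpha\cdot\bar\gamma$ into $n$ sub-loops of length at most $4D+2$. The Dehn-function bound for relatively hyperbolic groups (as in Osin's work) says that a loop of length $L$ has area $O(\delta(L))$ when peripheral Dehn functions are bounded by an at-least-linear $\delta$, so the total filling area is $O(n\,\delta(D))$. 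Passing to expectations, the polynomial tail of $D$ combined with polynomiality of $\delta$ should yield $\matE[\delta(D)]=O(\delta(\log n))$, giving the desired $O(n\,\delta(\log n))$ estimate.

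The main obstacle lies in the last step of part (3): Jensen's inequality points the wrong way for convex $\delta$, so the in-expectation bound $\matE[D]=O(\log n)$ is by itself insufficient. Making $\matE[\delta(D)]=O(\delta(\log n))$ rigorous requires the polynomial tail decay of $D$ from Theorem \ref{logtrk}, with tail exponent chosen larger than the degree of $\delta$, used together with the polynomiality hypothesis on $\delta$.
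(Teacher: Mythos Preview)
Your proposal is correct and follows essentially the same route as the paper. The concatenation/time-reversal trick for parts (1) and (2) is exactly what the paper does (phrased there as ``the distribution of $X_j^{-1}Y_k$ is the same as that of $X_{j+k}$''), and for part (3) the paper likewise subdivides the loop into $n$ sub-loops of length $O(\log n)$, invokes the relative Dehn function result (citing Farb rather than Osin) to fill each with area $O(\delta(\log n))$, and handles the tail exactly as you anticipate: by choosing the exponent $k$ in Theorem~\ref{logtrk} large relative to the polynomial degree of $\delta$, the bad event contributes negligibly to the expectation.
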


In particular, we see that expected values of both the thinness constant and the Dehn function can be much lower than worst-case values.
Also, notice that if all triangles in a group are $\log(n)$-thin then the group is hyperbolic (see for example \cite{Gro-hyp}, \cite{Dr-surv} or \cite{FS}) and that if the Dehn function is subquadratic then it is actually linear and the group is hyperbolic \cite{Ols-subquad} (see also \cite[pages 149-157]{ggt-proc}).
In view of this, the theorem can be interpreted as indicating that random configurations in relatively hyperbolic groups resemble corresponding configurations in hyperbolic groups.

\subsection*{Acknowledgement} The author would like to thank Ilya Kapovich for the suggestions that kick-started this work, Fran\c{c}ois Ledrappier for helpful comments and Cornelia Dru\c{t}u for numerous useful comments, corrections and suggestions on preliminary drafts of this paper.

\section{Outline}
We emphasise that our methods of proof are completely different from other methods that have been used to show sublinear tracking. The proof of Theorem \ref{logtrkintro} uses three main ingredients, discussed in the following subsections.

\subsection{Projections estimate} We will consider closest point projections on peripheral sets and show that it is unlikely that two random points project far away on some peripheral set (left coset of peripheral subgroup in the context of groups). The main tool we will use to show this is Corollary \ref{B-type}, an inequality for closest point projections on peripheral sets, pointed out in \cite{Si-proj}, which is similar to a very useful inequality due to Behrstock \cite{Be-th} in the context of subsurface projections. This step can be skipped if one wants to show Theorem \ref{logtrkintro} for hyperbolic groups only\footnote{An abridged version of the argument for hyperbolic groups is available on the author's blog \href{http://alexsisto.wordpress.com/2013/01/28/tracking-of-random-walks-with-geodesics/}{http://alexsisto.wordpress.com/2013/01/28/tracking-of-random-walks-with-geodesics/}}. The outcomes of this argument are Lemma \ref{smallproj} and Lemma \ref{smallrndprj}. We record the latter as it could be useful in other contexts as well.

\subsection{Exponential divergence} The divergence of non-trivial relatively hyperbolic groups is at least exponential (see \cite{Si-metrrh}). The same proof, using the consequence of the projections estimate, guarantees that points on a geodesic connecting the endpoints of a random path are close to the random path. For hyperbolic groups, this is a standard argument, see \cite[Proposition III.H.1.6]{BrHae}.

\subsection{Drift estimates} The last ingredient is a fact about random walks on non-amenable groups, namely the fact that random walks on nonamenable groups make linear progress (see \cite[Lemma 8.1(b)]{Wo}). This allows us to exclude the existence of ``large detours'' in the random path.
\par\medskip
Except for exponential divergence, all ingredients are available for mapping class groups. In that context instead of exponential divergence we have quadratic divergence, and this is why the rate we get for mapping class groups is worse than the one for relatively hyperbolic groups. We will actually proceed slightly differently in the mapping class group case in order to exhibit a variation on the argument for relatively hyperbolic groups, and we will use a drift estimate in the curve complex due to Maher \cite{Ma-linprog}. 

\section{Relatively hyperbolic groups}
In this section we recall the results about relatively hyperbolic spaces that we will need. We remark that relatively hyperbolic spaces are also called asymptotically tree-graded spaces in, e.g., \cite{DS-treegr,D-relhyp,Si-proj}. The notion of metric relative hyperbolicity coincides in the context of Cayley graphs of groups with the notion of (strong) relative hyperbolicity as studied, for example, in \cite{Gro-hyp,Fa-relhyp,Bow-99-rel-hyp,Os-rh, GrMa-perfill}. Throughout the section, let $X$ be a geodesic metric space hyperbolic relative to the collection of subsets $\calP$, called peripheral sets.

The following lemma can be found in \cite{Si-proj}. It also follows by combining facts about relative hyperbolicity discovered by Dru\c{t}u and Sapir \cite{DS-treegr}.

\begin{lemma}
For each $P\in\calP$ denote by $\pi_P:X\to P$ a coarse closest point projection, i.e. a function so that $d(x,\pi_P(x))\leq d(x,P)+1$. There exists $C$ with the following properties.
\begin{enumerate}
 \item For each distinct $P,Q\in\calP$ we have $diam(\pi_P(Q))\leq C$.
 \item For each $x,y\in X$ and $P\in\calP$ so that $d(\pi_P(x),\pi_P(y))\geq C$ we have $d([x,y],\pi_P(x)),\, d([x,y],\pi_P(y))\leq C$ for every geodesic $[x,y]$.
\end{enumerate}
\end{lemma}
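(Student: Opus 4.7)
The plan is to work from the characterisation of $X$ as asymptotically tree-graded with respect to $\calP$ in the sense of Dru\c{t}u--Sapir. Three structural features form the backbone of the argument: (a) for every $R>0$ there is a constant $D=D(R)$ such that for distinct $P,Q\in\calP$ the $R$-neighbourhoods of $P$ and $Q$ intersect in a set of diameter at most $D$; (b) \emph{bounded penetration}: there is a threshold $C_0$ so that whenever $d(\pi_P(x),\pi_P(y))\geq C_0$, every geodesic $[x,y]$ enters $P$ within uniformly bounded distance of $\pi_P(x)$ and exits $P$ within uniformly bounded distance of $\pi_P(y)$; and (c) peripheral sets are uniformly quasi-convex in $X$. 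All three are consequences of tree-gradedness of asymptotic cones of $X$.

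For part (1) I would argue by contradiction. If $\mathrm{diam}(\pi_P(Q))$ is allowed to be arbitrarily large, then for some distinct $P,Q\in\calP$ one can pick $q_1,q_2\in Q$ with $d(\pi_P(q_1),\pi_P(q_2))$ as large as desired. Applying (b) to $x=q_1$, $y=q_2$, any geodesic $[q_1,q_2]$ enters $P$ near $\pi_P(q_1)$ and exits near $\pi_P(q_2)$, so the subsegment of $[q_1,q_2]$ between entry and exit lies in a uniform neighbourhood of $P$ and has length comparable to $d(\pi_P(q_1),\pi_P(q_2))$. On the other hand, by (c) the whole geodesic $[q_1,q_2]$ also lies in a uniform neighbourhood of $Q$. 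Hence this subsegment produces a long piece in a uniform neighbourhood of both $P$ and $Q$, contradicting (a).

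Part (2) is a direct application of (b): take $C$ to be $C_0$ plus the uniform constant appearing in the conclusion of (b), and the required bounds $d([x,y],\pi_P(x))\leq C$ and $d([x,y],\pi_P(y))\leq C$ are exactly the bounded-penetration statement.

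The principal obstacle is establishing (b) with explicit uniform quantitative control on the entry and exit points, since this is what upgrades a projection estimate to a geodesic statement. I would either cite it from \cite{DS-treegr,Si-proj}, where it is essentially an in-space reformulation of the tree-graded axioms on asymptotic cones, or reprove it via a triangle argument: a geodesic triangle with one side $[x,y]$ and a third vertex taken deep inside $P$ strictly between $\pi_P(x)$ and $\pi_P(y)$ must, by the ``thin outside peripherals'' property of triangles in $X$, force the side $[x,y]$ to fellow-travel along $P$ over a range covering most of the segment from $\pi_P(x)$ to $\pi_P(y)$. Once (b) is in hand, both parts of the lemma follow with $C$ chosen as the maximum of the thresholds and uniform constants furnished by (a), (b), and (c).
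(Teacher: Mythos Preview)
The paper does not actually give a proof of this lemma: it simply cites \cite{Si-proj} and remarks that it ``also follows by combining facts about relative hyperbolicity discovered by Dru\c{t}u and Sapir \cite{DS-treegr}.'' Your sketch is precisely a fleshing-out of that second route, and the argument is sound.

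Two minor remarks on the sketch itself. First, your property (b) is essentially the content of part (2), so the logical structure is really: establish (b) from \cite{DS-treegr} (or \cite{Si-proj}), then part (2) is immediate and part (1) follows from (b) together with (a) and (c). That is fine, but worth saying explicitly so the reader sees that (b) is doing all the work. Second, in your argument for (1) you invoke (c) to keep $[q_1,q_2]$ near $Q$, but you also implicitly need the subsegment of $[q_1,q_2]$ between the entry and exit points to stay in a uniform neighbourhood of $P$; this uses quasi-convexity of $P$ as well (or the standard fact that a geodesic between two points close to a peripheral set stays close to it). Since (c) applies uniformly to all peripherals this is harmless, but it should be mentioned.
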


In other words, the projection of one peripheral set onto another one has bounded diameter and if two points project far away on some peripheral set then the geodesic connecting them passes close to the projection points.

The following corollary follows from the lemma by standard arguments, see e.g. \cite[Lemma 2.5]{Si-contr}.

\begin{cor}[Projections estimate, cfr. Theorem \ref{Behin}]
\label{B-type}
There exists $B$ with the following property. Let $P,Q\in\calP$ be distinct and let $x\in X$. Then
$$\min\{d(\pi_P(x),\pi_P(Q)),\, d(\pi_Q(x),\pi_Q(P))\}\leq B.$$
\end{cor}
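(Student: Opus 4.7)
The plan is to argue by contradiction: suppose that both $d(\pi_P(x),\pi_P(Q))$ and $d(\pi_Q(x),\pi_Q(P))$ exceed a constant $B\geq C$ to be specified. Write $p=\pi_P(x)$ and $q=\pi_Q(x)$. Since $\pi_P(q)\in\pi_P(Q)$, the assumption forces $d(\pi_P(x),\pi_P(q))\geq B\geq C$, so part (2) of the lemma, applied to the pair $(x,q)$, furnishes a point $y_1$ on any fixed geodesic $[x,q]$ with $d(y_1,p)\leq C$. Swapping the roles of $P$ and $Q$ gives, symmetrically, a point $y_2\in[x,p]$ with $d(y_2,q)\leq C$.

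Next I would turn these into length inequalities via the triangle inequality. From $d(x,q)=d(x,y_1)+d(y_1,q)$ together with $d(x,y_1)\geq d(x,p)-C$ and $d(y_1,q)\geq d(p,q)-C$, one obtains
\[
d(x,q)\geq d(x,p)+d(p,q)-2C,
\]
and the analogous estimate using $y_2\in[x,p]$ gives
\[
d(x,p)\geq d(x,q)+d(p,q)-2C.
\]
Adding the two forces $d(p,q)\leq 2C$, so the two projection points are uniformly close.

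Once $p$ and $q$ are this close, the contradiction is immediate. Since $p\in P$, the point $\pi_Q(p)$ belongs to $\pi_Q(P)$ and satisfies $d(p,\pi_Q(p))\leq d(p,Q)+1\leq d(p,q)+1\leq 2C+1$, hence
\[
d(\pi_Q(x),\pi_Q(P))\leq d(q,p)+d(p,\pi_Q(p))\leq 4C+1,
\]
which contradicts our standing hypothesis as soon as $B>4C+1$. Taking $B=4C+2$ (or any larger constant) then gives the desired bound. I do not expect a serious obstacle: the argument is entirely a bookkeeping exercise with the constants supplied by the previous lemma and by the coarse definition of $\pi_P,\pi_Q$, and the key step---that the two geodesics $[x,p]$ and $[x,q]$ essentially pass through each other's endpoints---is the standard ``Behrstock inequality'' mechanism transposed to peripheral projections.
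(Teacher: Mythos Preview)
Your proof is correct and is precisely the ``standard argument'' the paper alludes to: the paper does not actually write out a proof, but simply says the corollary follows from the preceding lemma by standard arguments and cites \cite[Lemma 2.5]{Si-contr}. Your contradiction argument---using part (2) of the lemma twice to force $d(p,q)\leq 2C$ and then deriving the bound on $d(\pi_Q(x),\pi_Q(P))$---is exactly the expected mechanism.
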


So, if $x$ and $Q$ have far away projections on $P$, then $x$ and $P$ have close projections on $Q$, providing a useful trick to control a projection. 

In \cite{Si-metrrh} relative hyperbolicity has been characterised in terms of transient sets of geodesics, that were introduced in \cite{Hr-relqconv}. Roughly speaking, a point on a geodesic fails to be transient if it is well-within a subgeodesic that fellow-travels a peripheral set. The formal definition is below.

Let $\mu,R$ be constants and $\alpha$ a geodesic in $X$. Denote by $deep_{\mu,R}(\alpha)$ the set of points $p$ of $\alpha$ that belong to some subgeodesic $[x,y]$ of $\alpha$ with endpoints in $N_{\mu}(P)$ for some $P\in\calP$ and so that $d(p,x),d(p,y)>R$. Denote $trans_{\mu,R}(\alpha)=\alpha\backslash deep_{\mu,R}(\alpha)$, the set of \emph{transient points}. 

The reader is referred to \cite{Hr-relqconv,Si-metrrh} for the following properties of transient and deep sets. Some of them follow from results in \cite{DS-treegr} which are however not phrased in terms of these notions.

\begin{lemma}
\label{transprop}
 There exist $\mu,R,D,t,C$ with the following properties.
\begin{enumerate}
 \item $[$Relative Rips condition$]$ For each $x,y,z\in X$ we have
$$trans_{\mu,R}([x,y])\subseteq N_{D}(trans_{\mu,R}([x,z])\cup trans_{\mu,R}([z,y])).$$
 \item $deep_{\mu,R}([x,y])$ is contained in a disjoint union of subgeodesics of $[x,y]$ each contained in $N_{t\mu}(P)$ for some $P\in\calP$, called \emph{deep component along} $P$.
 \item The endpoints of the deep component of $[x,y]$ along $P\in\calP$ (if it exists) are $C$-close to $\pi_P(x),\pi_P(y)$.
 \item If for some $P\in\calP$ we have $d(\pi_P(x),\pi_P(y))>C$, then $[x,y]$ has a deep component along $P$ of length at least $d(\pi_P(x),\pi_P(y))-C$.
\end{enumerate}
\end{lemma}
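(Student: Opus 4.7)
The natural strategy is to pass to the cusped space $\hat X$ obtained by attaching a combinatorial horoball (in the sense of Groves--Manning) to each peripheral set $P\in\calP$. Under the standard equivalences between the various definitions of relative hyperbolicity, $\hat X$ is $\delta$-hyperbolic, and every $X$-geodesic lifts to a uniform quasi-geodesic in $\hat X$ whose intersection with a horoball based at $P$ has depth controlled by the length of the corresponding deep component in $X$. All four assertions would then be transported back to $X$ via this dictionary, and the role of the thresholds $\mu,R,t,C,D$ is precisely to make the dictionary uniform, independent of the particular $x,y,z$.

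For (1), I would apply the $\delta$-thin triangle property in $\hat X$ to the triangle whose sides are the lifts of $[x,y]$, $[x,z]$, $[y,z]$. A transient point $p\in trans_{\mu,R}([x,y])$ lifts to a point of the quasi-geodesic lift that is not deep inside any horoball; by thin triangles plus the Morse lemma, its partner on one of the other two sides lies $D$-close to $p$ in $\hat X$, and is also not deep in any horoball, so it is transient. For (2), I would choose $\mu,R$ large relative to the constant of the previous lemma (which controls projections between distinct peripheral sets) and to the lifting quasi-geodesic constants; then a single $X$-geodesic cannot be simultaneously deep along two distinct peripheral sets $P\neq Q$, since $diam(\pi_P(Q))$ is uniformly bounded. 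Consequently $deep_{\mu,R}([x,y])$ decomposes as a disjoint union of subgeodesics, each fellow-travelling a single $N_{t\mu}(P)$.

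For (3) and (4), I would use the standard horoball-projection fact in hyperbolic spaces: for any horoball $H$ in $\hat X$, the closest-point projection $\hat\pi_H$ agrees up to bounded additive error with the entry point of any $\hat X$-geodesic from a point outside $H$ into $H$, and restricted to $X\setminus H$ it also agrees with $\pi_P$ up to bounded error. Translating this back to $X$, the endpoints of a deep component along $P$ on $[x,y]$ correspond to entry and exit points of the lifted quasi-geodesic into the horoball at $P$, hence are close to $\pi_P(x)$ and $\pi_P(y)$, which proves (3) after a bookkeeping of the constants. Conversely, if $d(\pi_P(x),\pi_P(y))>C$, the lift is forced to traverse the horoball from near $\pi_P(x)$ to near $\pi_P(y)$, producing a deep component on $[x,y]$ of length at least $d(\pi_P(x),\pi_P(y))-C$, giving (4).

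The main obstacle I expect is the simultaneous calibration of the five constants $\mu,R,D,t,C$ so that the correspondence between $X$-deep components and $\hat X$-horoball excursions is uniform rather than configuration-dependent: $\mu$ must exceed the quasi-geodesic constants of the lift, $R$ must exceed those same constants plus the projection bound $C$, and $t$ must absorb the quasi-geodesic Morse constant in $\hat X$. Once this calibration is in place, each of the four claims reduces to a routine fact about thin triangles or horoball projections in the hyperbolic space $\hat X$.
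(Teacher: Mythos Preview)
The paper does not actually prove this lemma: it states it and refers the reader to \cite{Hr-relqconv,Si-metrrh}, noting that some parts also follow from results in \cite{DS-treegr} (phrased there in the asymptotically tree-graded language rather than in terms of transient sets). So there is no in-paper argument to compare against; the lemma is quoted as background.

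Your cusped-space strategy is a legitimate route to these facts and is one of the standard ones, though the cited references largely work with the tree-graded/saturation viewpoint rather than with Groves--Manning horoballs. Items (2)--(4) do reduce, as you say, to the standard dictionary between deep components in $X$ and horoball excursions of the lifted quasi-geodesic in $\hat X$, once the constants are calibrated.

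The place where your sketch is too quick is (1). You write that the thin-triangle partner of a transient $p$ ``is also not deep in any horoball, so it is transient''. Bounded $\hat X$-distance from $p$ does bound the depth of the partner, but bounded depth (equivalently, bounded $X$-distance from $p$) does \emph{not} by itself force the partner to lie in $trans_{\mu,R}([x,z])$: it could sit well inside a long deep component of $[x,z]$ along some $Q$, since ``deep'' is a statement about the position along the geodesic, not about distance to $\calP$. What one actually needs is that if the partner were deep along $Q$, then $p$ would also have to be deep along $Q$ on $[x,y]$; this uses the projection bounds of the previous lemma (bounded $\pi_Q$-image of the other peripheral sets and of short segments) together with $R$ chosen large relative to $D$ and the quasi-geodesic constants. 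That is precisely the calibration you flag at the end, and it is where the genuine content of the Relative Rips condition lives; the thin-triangle step alone does not finish it.
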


\begin{conv}
 When we write $trans$ instead of $trans_{\mu,R}$ we implicitly fix constants $\mu,R$ as in the lemma.
\end{conv}

\section{Logarithmic tracking}

The aim of this section is to show the following. We denote a supremum over all geodesics from $x$ to $y$ by $\sup_{[x,y]}$. Recall that we say that a relatively hyperbolic group is non-trivial if it is not virtually cyclic and all peripheral subgroups have infinite index.

\begin{thm}
Let $\{X_n\}$ be a simple random walk on the non-trivial relatively hyperbolic group $G$. There exists $C$ so that for each $n\geq 2$ we have
$$\mathbb{E}\left[\sup_{[1,X_n]} d_{Haus}(\{X_i\}_{i\leq n}, [1,X_n])\right]\leq C\log (n).$$
\end{thm}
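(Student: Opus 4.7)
The Hausdorff distance splits into two one-sided bounds: (a) $\max_{i\le n} d(X_i,[1,X_n])$ and (b) $\max_{p\in[1,X_n]} d(p,\{X_i\}_{i\le n})$. I would aim to bound each by a polynomial tail estimate in a threshold $R$, so that choosing the truncation level to be $R=O(\log n)$ gives the required logarithmic expectation.

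\textbf{Step 1: a projections estimate for the walk.} Following the outline, my first goal is a lemma saying that for $1$ and $X_n$ (and, by translation invariance, for any pair $X_i,X_j$) the probability that there exists some peripheral set $P\in\calP$ with $d(\pi_P(X_i),\pi_P(X_j))\ge R$ decays polynomially in $R$. The tool is Corollary \ref{B-type}: a large projection forces the walk to traverse a specific small neighbourhood in a rigid alignment, and the infinite-index assumption on peripherals together with non-amenability should make this unlikely. The anticipated output is Lemma~\ref{smallrndprj} referenced in the outline.

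\textbf{Step 2: handling (a) via drift.} The standard linear drift on non-amenable groups gives $d(1,X_k)\ge vk$ with probability $1-O(e^{-ck})$ for some $v,c>0$. Applying this to the two halves of the walk and to the whole walk, the triangle defect $\Delta_i := d(1,X_i)+d(X_i,X_n)-d(1,X_n)$ has exponential tails. In a hyperbolic group, the Morse property immediately converts $\Delta_i\le R$ into $d(X_i,[1,X_n])\le R+O(\delta)$. In the relatively hyperbolic setting I would instead use the transient decomposition and Step 1: large distance from $X_i$ to every transient point of $[1,X_n]$ forces a large peripheral projection or a macroscopic defect, both ruled out with overwhelming probability.

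\textbf{Step 3: handling (b) via exponential divergence.} I would decompose $[1,X_n]$ into transient and deep parts using Lemma \ref{transprop}. By parts (3)--(4) of that lemma, each deep component along a $P\in\calP$ has length at most $d(\pi_P(1),\pi_P(X_n))+O(1)$, which by Step 1 is $O(\log n)$ with high probability, uniformly over the $P$'s that actually occur. So every deep point is $O(\log n)$-close to a transient point, and it suffices to bound $d(p,\{X_i\})$ for transient $p$. Here I use that non-trivial relatively hyperbolic groups have at least exponential divergence \cite{Si-metrrh}: if a transient point $p\in[1,X_n]$ were at distance $R$ from the walk, the walk would be a path of length exactly $n$ from $1$ to $X_n$ avoiding $B(p,R)$, and Step 1 guarantees that the hypotheses of the divergence estimate along the transient skeleton are satisfied, forcing $n\ge c\exp(c'R)$, i.e.\ $R=O(\log n)$.

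\textbf{Main obstacle.} I expect Step 1 to be the delicate step, because a priori the walk could dwell for a long time near a single peripheral coset and produce large projections; ruling this out cleanly requires combining the Behrstock-type inequality of Corollary~\ref{B-type} with the infinite-index assumption on peripherals and a quantitative form of non-amenability, and this is where the ``non-trivial'' hypothesis does real work. Once Step 1 is in place, Steps 2 and 3 are essentially a union bound over the $O(n)$ indices $i$ and the $O(n)$-many candidate deep components, converting polynomial tails at threshold $R\asymp\log n$ into the claimed logarithmic expectation.
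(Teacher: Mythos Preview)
Your Steps~1 and~3 are essentially the paper's Lemmas~\ref{smallproj} and~\ref{div}: the projections estimate and the exponential-divergence bound $d(p,\alpha)\le C\log(l(\alpha)+1)$ for transient $p$. The use of Lemma~\ref{transprop}(3)--(4) to reduce deep points to transient ones via Step~1 is also exactly what the paper does.

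The gap is in Step~2. Linear drift does \emph{not} give exponential tails for the defect $\Delta_i=d(1,X_i)+d(X_i,X_n)-d(1,X_n)$. Drift gives only \emph{lower} bounds $d(1,X_k)\ge vk$ with high probability; combining the trivial upper bounds $d(1,X_i)\le i$, $d(X_i,X_n)\le n-i$ with a lower bound $d(1,X_n)\ge vn$ yields at best $\Delta_i\le (1-v)n$, which is linear in $n$, not logarithmic. Controlling $\Delta_i$ to $O(\log n)$ is essentially equivalent to the theorem itself, so you cannot bootstrap from drift alone. Your fallback (``large distance from $X_i$ to transient points forces a large projection or a macroscopic defect'') has the same problem: the ``macroscopic defect'' branch is not ruled out by drift.

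The paper avoids this by deriving direction~(a) \emph{from} direction~(b). Once Step~3 gives that every transient point of $[1,X_n]$ is $C\log n$-close to the sample path, suppose some $X_j$ is far from the transient set. Then every transient point is close to either $\{X_i\}_{i<j}$ or $\{X_i\}_{i>j}$; since the transient set has only $O(\log n)$ gaps (Step~1), some transient point $p'$ is close to both sides, yielding $i_0<j<i_1$ with $d(X_{i_0},X_{i_1})=O(\log n)$. \emph{Now} linear progress (in the form of Lemma~\ref{linsubprog}: subwalks of length $\ge C\log n$ move at least linearly) forces $i_1-i_0=O(\log n)$, hence $d(X_j,p')=O(\log n)$. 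So drift enters not to bound a triangle defect, but to rule out long subwalks with nearby endpoints.
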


We will actually show the following refinement.
Following, e.g., \cite{Ma-linprog}, we say that a random walk $\{X_n\}$ on the group $G$ acting on the pointed metric space $(X,p)$ makes \emph{linear progress} if there exists $C_0\geq 1$ so that
$$\matP\left[d(p,X_np)\leq n/C_0\right]\leq C_0 e^{-n/C_0}.$$
As noticed in \cite[Proposition 5.9]{CM-statscl}, it follows from \cite[Lemma 8.1(b)]{Wo} that when $G$ is a non-amenable group acting on itself, any symmetric random walk makes linear progress.

\begin{thm}
\label{logtrk}
Suppose that the finitely generated group $G$ acts by isometries on the relatively hyperbolic space $X$ permuting the peripheral sets. Suppose also that there are at least 2 peripheral sets and that the stabiliser of each peripheral set has unbounded orbits. Let $\mu$ be a symmetric probability measure on $G$ whose finite support generates $G$ and let $\{X_n\}$ be the corresponding random walk, which we assume to make linear progress.
Then for each $p\in X$ and for each $k\geq 1$ there exists $C$ so that
$$\mathbb{P}\left[\sup_{[p,X_np]} d_{Haus}(\{X_ip\}_{i\leq n}, trans([p,X_np]))\geq C\log (n)\right]\leq Cn^{-k}.$$
Moreover, the same is true for $[p,X_np]$ substituting $trans([p,X_np])$.
\end{thm}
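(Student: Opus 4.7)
My plan is to split the Hausdorff distance into its two constituent suprema and combine the three inputs advertised in the outline: a projection estimate for random walks (built on Corollary \ref{B-type} and symmetry of $\mu$), the exponential divergence of non-trivial relatively hyperbolic spaces around transient points (from \cite{Si-metrrh}), and the linear progress hypothesis. I would view the sample path $p, X_1 p, \ldots, X_n p$ as a continuous path $\alpha_n$ of total length at most $Mn$, where $M := \max_{g \in \operatorname{supp}(\mu)} d(p, gp)$.

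The first and most delicate task is to establish a probabilistic projection estimate: for each $k \geq 1$ there exists $C$ such that, off an event of probability $\leq n^{-k}/2$,
\[
d\bigl(\pi_P(p),\, \pi_P(X_i p)\bigr) \leq C \log n \quad \text{for every } P \in \calP \text{ and every } i \leq n.
\]
Corollary \ref{B-type} is used iteratively: a large projection onto some $P$ forces a concentrated configuration on every other peripheral set $Q$, and this configuration has polynomially small probability thanks to the symmetry of $\mu$, the finite support, and the hypothesis that there are at least two peripheral sets with unbounded-orbit stabilisers (so that the walk cannot degenerate into the orbit of a single peripheral set). The union bound is restricted to the (at most polynomially many) peripheral sets that actually receive significant projection from a walk of length $n$.

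Granted the projection estimate, one direction of the Hausdorff bound follows almost deterministically. By Lemma \ref{transprop}(3) the projection estimate forces every deep component of $[p, X_n p]$ to have length $O(\log n)$, so the transient skeleton is $O(\log n)$-dense in $[p, X_n p]$. Exponential divergence around transient points \cite{Si-metrrh} then forces each transient point to lie within $O(\log n)$ of $\alpha_n$: any path of length $L$ from $p$ to $X_n p$ avoiding an $R$-ball around a transient point has length at least $e^{cR}$, and $l(\alpha_n) \leq Mn$ gives $R = O(\log n)$. Hence every point of $[p, X_n p]$, and in particular of $\operatorname{trans}([p, X_n p])$, lies within $O(\log n)$ of the walk, which takes care of one of the two suprema for both formulations of the theorem.

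For the reverse direction, I would fix $i$ and bound $d(X_i p, \operatorname{trans}([p, X_n p]))$ by applying linear progress simultaneously to the walk $X_1, \ldots, X_i$ and to the continuation walk $X_i^{-1} X_{i+j}$ (which, by symmetry of $\mu$, is itself a symmetric random walk of length $n - i$), combined with the projection estimate for each half. These inputs force the Gromov-product-type quantity $(p \mid X_n p)_{X_i p}$, computed relative to the transient skeleton, to be $O(\log n)$, ruling out a ``large detour'' at $X_i p$; the relative Rips condition Lemma \ref{transprop}(1) then transfers closeness of $X_i p$ to $\operatorname{trans}([p, X_i p]) \cup \operatorname{trans}([X_i p, X_n p])$ into closeness to $\operatorname{trans}([p, X_n p])$. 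A union bound over $i \leq n$ loses at most a factor $n$, absorbed by raising $k$ in the projection estimate. The main obstacle is clearly the projection estimate itself: extracting polynomial probabilistic decay from the deterministic Corollary \ref{B-type}, uniformly over the \emph{a priori} infinite family $\calP$, requires a careful reversal/symmetry argument, and is the only step where both the relatively hyperbolic structure and the probabilistic hypotheses play non-trivial roles.
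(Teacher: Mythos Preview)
Your overall architecture matches the paper's: projection estimate (Lemma~\ref{smallproj}), exponential divergence for the ``transient points are close to the walk'' direction (Lemma~\ref{div}), and linear progress for subwalks (Lemma~\ref{linsubprog}) to handle detours. The first direction and the reduction of $[p,X_np]$ to $trans([p,X_np])$ via short deep components are fine.

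The gap is in your reverse direction. The claim that linear progress of the two half-walks together with the projection estimate forces $(p\mid X_np)_{X_ip}=O(\log n)$ is not justified, and in fact fails already in the purely hyperbolic case (where the projection estimate is vacuous): linear progress only gives $d(p,X_ip)\geq i/C_0$, $d(X_ip,X_np)\geq(n-i)/C_0$, $d(p,X_np)\geq n/C_0$, while the path inequality gives $d(p,X_ip)+d(X_ip,X_np)\leq Mn$, so the Gromov product is bounded only by a constant times $n$, not $\log n$. Nothing in your inputs prevents $X_ip$ from sitting far off to the side of $[p,X_np]$. Separately, you invoke Lemma~\ref{transprop}(1) in the wrong direction: it says $trans([p,X_np])\subseteq N_D\big(trans([p,X_ip])\cup trans([X_ip,X_np])\big)$, not the reverse, so closeness of $X_ip$ to the right-hand side (which is automatic, $X_ip$ being an endpoint of both geodesics) does not transfer to closeness to $trans([p,X_np])$.

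The paper's mechanism is different and crucially \emph{reuses the first direction}. Once every point of $trans([p,X_np])$ is known to lie within $C_4\log n$ of $\{X_ip\}$, fix $j$ with $X_jp$ far from the transient set; then every transient point is close to $\{X_ip\}_{i<j}$ or to $\{X_ip\}_{i>j}$. Let $p'$ be the transient point closest to $X_np$ that is still $C_4\log n$-close to the ``before'' half. Because the transient set has $O(\log n)$ gaps (this is where the projection estimate enters), $p'$ is also $O(\log n)$-close to the ``after'' half. Hence there exist $i_0<j<i_1$ with $d(X_{i_0}p,X_{i_1}p)=O(\log n)$, and now Lemma~\ref{linsubprog} (linear progress for \emph{all} subwalks, not just the two halves at a fixed $i$) forces $i_1-i_0=O(\log n)$, whence $d(X_jp,X_{i_0}p)=O(\log n)$ and we are done. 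The key idea you are missing is this crossover-point argument, which converts the one-sided Hausdorff bound into the other via linear progress of subwalks.
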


Let us fix the notation of the theorem, including $k$. Let $\calH$ be the collection of peripheral sets of $X$. We now state and prove the three lemmas we need, and we will combine them together in the next subsection.

The following general fact about relative hyperbolicity will immediately imply that each point on $trans([p,X_np])$ is close to $\{X_ip\}_{i\leq n}$.

\begin{lemma}
\label{div}
 There exists $C_1$ with the following property. Let $\alpha$ be a discrete path from $x$ to $y$, where $x,y\in X$, and for some geodesic $[x,y]$ let $p$ be a point on $ trans([x,y])$. Then $d(p,\alpha)\leq C_1\log_2(l(\alpha)+1)+C_1$.
\end{lemma}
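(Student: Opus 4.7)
The plan is to prove Lemma \ref{div} by strong induction on $L = l(\alpha)$, using the relative Rips condition (Lemma \ref{transprop}(1)) as the sole geometric input. This is the exact relative analogue of the standard argument that $\delta$-hyperbolic spaces have at least exponential divergence, as in \cite[Proposition III.H.1.6]{BrHae}: iteratively halving a path and applying thin-triangle slimness forces exponential length in the excursion distance.

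For the base case I would take $L$ bounded by some small explicit constant $L_0$ (say $L_0=2$). Since $d(x,y)\leq l(\alpha)\leq L_0$, the geodesic $[x,y]$ has diameter $\leq L_0$, and so any $p\in[x,y]$ satisfies $d(p,\alpha)\leq d(p,x)\leq L_0$. Choosing $C_1\geq L_0$ then gives $d(p,\alpha)\leq C_1\leq C_1\log_2(L+1)+C_1$.

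For the inductive step (with $L>L_0$), let $D$ be the constant from the relative Rips condition. Split the discrete path at a midpoint $z$ so that the two sub-paths $\alpha_1$ from $x$ to $z$ and $\alpha_2$ from $z$ to $y$ have lengths at most $\lceil L/2\rceil$. Pick geodesics $[x,z]$ and $[z,y]$. Given $p\in trans([x,y])$, Lemma \ref{transprop}(1) yields some $p'\in trans([x,z])\cup trans([z,y])$ with $d(p,p')\leq D$. Without loss of generality $p'\in trans([x,z])$; applying the inductive hypothesis to $\alpha_1$ gives
$$d(p',\alpha_1)\leq C_1\log_2\bigl(\lceil L/2\rceil+1\bigr)+C_1.$$
Combining, $d(p,\alpha)\leq D+C_1\log_2(\lceil L/2\rceil+1)+C_1$.

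To close the induction it suffices to choose $C_1$ so that $D\leq C_1\bigl(\log_2(L+1)-\log_2(\lceil L/2\rceil+1)\bigr)$ for all $L\geq L_0$. The quantity in parentheses is bounded below by a positive constant over $L\geq 2$ (for example by $\log_2(4/3)$, the worst case $L=3$; for even $L\geq 2$ the ratio $(L+1)/(L/2+1)$ is at least $3/2$ and tends to $2$). Setting $C_1 := \max\{L_0,\, D/\log_2(4/3)\}$ closes the induction. The step that carries all the geometric weight is the use of the relative Rips condition, which is the only place where the transient part (as opposed to arbitrary points of $[x,y]$, which might sit deep inside a peripheral set and thus fail to satisfy this estimate) is essential; everything else is bookkeeping. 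I do not anticipate a serious obstacle beyond calibrating the two constants $C_1$ and $L_0$ so that the base case and the inductive step are both valid simultaneously.
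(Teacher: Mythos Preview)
Your proposal is correct and follows essentially the same route as the paper: binary subdivision of $\alpha$ at a midpoint, an application of the relative Rips condition (Lemma~\ref{transprop}(1)) to move $p$ to a transient point on one of the two sub-triangles, and an induction that picks up one factor of $D$ per halving. The only cosmetic difference is that the paper inducts on $k$ with $l(\alpha)\leq 2^k$ rather than directly on $L$, which lets it avoid the $\lceil L/2\rceil$ bookkeeping you carry out.
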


\noindent\emph{Proof.}
 The proof is the same as \cite[Proposition 6.17]{Si-metrrh} and is an easy generalization of, e.g., \cite[Proposition III.H.1.6]{BrHae}. We give the proof for the sake of completeness.

We argue by induction on $k$ such that $\mathrm{length}\, (\alpha ) \leq 2^k$.
 
Let $D$ be as in Lemma \ref{transprop}. If $l(\alpha)\leq 2$, then the lemma holds, with $C_1=2$. Assume that the statement is proven for paths of length $\leq 2^k$, let $\alpha$ be a path of length $\leq 2^{k+1}$. Split $\alpha$ into paths $\alpha_i$ of length $l(\alpha)/2\geq 1$ and let $q$ be the common endpoint. Then $p$ is $D$-close to some $p'\in trans([x,q])\cup trans([q,y])$. By induction we have that $d(p',\alpha)\leq D\log_2(l(\alpha)/2)+2$, so that
\[
 \pushQED{\qed}
d(p,\alpha)\leq d(p,p')+d(p',\alpha)\leq D\log_2(l(\alpha))+2.\qedhere
 \popQED
\]

\begin{figure}[ht]
 \includegraphics[scale=0.7]{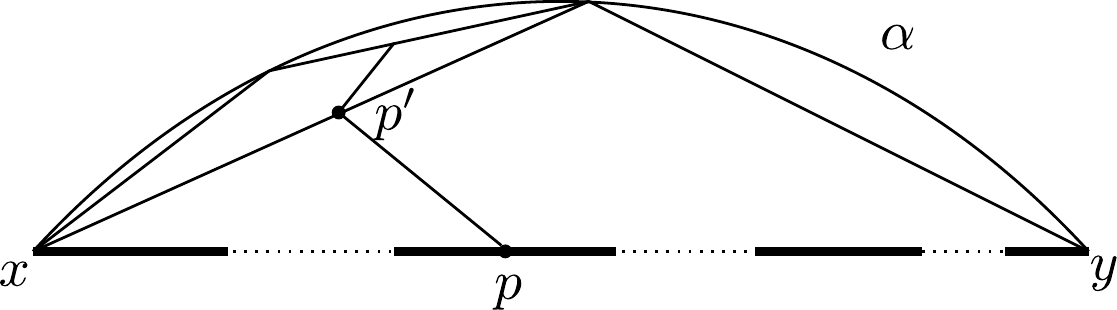}
\caption{Proof of Lemma \ref{div}. The thick segments along $[x,y]$ represent the transient set.}
\end{figure}

Let us now show that the deep components of $[p,X_np]$ are expected to be logarithmically small. It will be convenient to set, for $H\in\calH$,
$$d_H(\cdot,\cdot)=d(\pi_H(\cdot),\pi_H(\cdot)),$$
as it is customary for subsurface projections.

\begin{lemma}
\label{smallproj}
 There exists $C_2$ so that, for each $n\geq 1$,
 $$\matP\left[\exists H\in\calH: d_H(p,X_np)\geq C_2\log (n)\right]\leq C_2n^{-k}.$$
\end{lemma}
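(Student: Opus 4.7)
The strategy is to combine the projection estimate (Corollary~\ref{B-type}) with linear progress and Lemma~\ref{div} to reduce the event to a family of sub-events of exponentially small probability, and then take a union bound over the peripheral sets that can actually contribute.

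First I would localise the walk near $H$. If $d_H(p, X_n p) \geq L$ for $L$ much larger than the constant $C$ of Lemma~\ref{transprop}, then by parts (3) and (4) of that lemma the geodesic $[p, X_n p]$ has a deep component along $H$ with endpoints $a, b$ satisfying $l([a,b]) \geq L - C$ and $d(a, \pi_H(p)), d(b, \pi_H(X_n p)) \leq C$. Points just outside this deep component lie in $trans([p, X_n p])$, so Lemma~\ref{div} supplies indices $i < j$ with $X_i p$ and $X_j p$ each within $O(\log n)$ of $H$ and with their $\pi_H$-images separated by at least $L - O(\log n)$. Moreover, by Lemma~\ref{transprop}(2), deep components along distinct peripheral sets are disjoint, so at most $O(n/L)$ peripheral sets $H$ can satisfy $d_H(p, X_n p) \geq L$ (using $d(p, X_n p) = O(n)$, since $\mu$ has finite support).

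Next I would establish a single-coset, single-pair estimate: for each fixed $H \in \calH$ and each pair $i < j \leq n$,
$$\matP\bigl[X_i p, X_j p \in N_{C_3 \log n}(H),\ d_H(X_i p, X_j p) \geq L/2\bigr] \leq C' e^{-c L}\,.$$
Intuitively, on such an event the walk enters and exits a neighbourhood of $H$ while traversing a large $\pi_H$-distance; by Corollary~\ref{B-type}, intermediate positions whose $\pi_H$-image falls between those of $X_i p$ and $X_j p$ must themselves be near $H$, which effectively confines a long stretch of the walk to a coarse copy of $\mathrm{Stab}(H)$. The linear-progress hypothesis, together with the assumption that there are at least two peripheral orbits (hence $\mathrm{Stab}(H)$ does not act with bounded codiameter), makes such confinement exponentially unlikely in the traversed $\pi_H$-distance. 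A union bound over the $O(n/L)$ relevant $H$'s and the $O(n^2)$ pairs $(i, j)$, combined with the failure event for linear progress, then gives
$$\matP[\exists H \in \calH: d_H(p, X_n p) \geq L] \leq C'' n^3 L^{-1} e^{-cL} + C_0 e^{-n/C_0}\,,$$
and setting $L = C_2 \log n$ with $C_2 > (k+3)/c$ yields the claimed bound.

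The main obstacle is the single-coset estimate in the second step. The delicate point is that we assume only that $\mathrm{Stab}(H)$ has unbounded orbits, not that $H$ is a cocompact subgroup orbit, so the exponential escape from a neighbourhood of $H$ must be extracted from linear progress together with the rigidity imposed by Corollary~\ref{B-type}, rather than from a direct non-amenable escape argument. This is where the bulk of the technical work goes, and I expect that a careful bookkeeping of how the projection estimate forces the walk's intermediate positions to sit inside a tubular neighbourhood of $H$ will be the decisive tool.
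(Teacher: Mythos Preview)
Your reduction in the first paragraph (localising via deep components and counting at most $O(n/L)$ relevant peripheral sets) is fine and matches what the paper does. The gap is in your ``single-coset, single-pair estimate'', which is both the heart of the lemma and the place where your intuition does not survive scrutiny.

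Two concrete problems. First, Corollary~\ref{B-type} does not imply that intermediate positions of the walk are near $H$. That corollary compares projections of a point onto two \emph{different} peripheral sets; it says nothing about the distance of a point to $H$ in terms of its $\pi_H$-image. A point can have $\pi_H$-image sitting between $\pi_H(X_ip)$ and $\pi_H(X_jp)$ while being arbitrarily far from $H$: the projection $\pi_H$ is coarsely Lipschitz regardless of distance to $H$. So no confinement follows. Second, even if the walk were confined to a neighbourhood of $H$ between times $i$ and $j$, this would not contradict linear progress: the walk can make linear progress \emph{along} $H$ (the peripheral sets are unbounded). Linear progress controls the speed of the walk, not its direction, and there is no mechanism in your sketch that turns speed into exponential decay of $d_H(p,X_np)$.

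The paper's argument is quite different and does not pass through linear progress at all for this step. It establishes the exponential decay
\[
\matP\bigl[d_H(p,X_np)\geq l+K'\bigr]\leq K'\,\matP\bigl[d_H(p,X_np)\in[l-K',l+K')\bigr]
\]
by a direct word-substitution trick: given a word $w$ with $d_H(p,g(w)p)$ large, one locates the minimal prefix $w_1$ at which the projection first exceeds $l$, and then replaces a bounded-length block $w_2$ immediately after it by a word $w_2'$ representing an element in the stabiliser of some other peripheral set $H_i$. The role of Corollary~\ref{B-type} is then to show that the suffix $w_3$ contributes only boundedly to $d_H$ once $g(w_1)H_i$ and $H$ project far apart on each other, so that $d_H(p,g(w_1w_2'w_3)p)\in[l-K_3,l+K_3)$. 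This gives a bounded-to-one map from bad words to moderate words, hence the exponential decay. The hypothesis that stabilisers have unbounded orbits is used to guarantee that $w_2'$ can always be chosen to push $\pi_{H_i}(g(w_3)p)$ far from $\pi_{H_i}(g(w_1)^{-1}H)$; it is not used as an escape-from-$H$ statement.
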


\begin{proof}
The usual notation $\matP[\cdot|\cdot]$ will be used for the conditional probability. We will show that there exists $K$ so that:
\begin{enumerate}
 \item for all $l\geq 0$ and $H\in\calH$ we have
  $$\matP\left[d_H(p,X_np)\geq l\right|d_{H}(p,X_np)\geq K]\leq Ke^{-l/K}.$$
  \item for any $g\in G$ the set $A(g)=\{H\in \calH: d_H(p,gp)\geq K\}$ satisfies $|A(x)|\leq K d(p,gp)$. 
\end{enumerate}

Using these two facts we can make the estimate:
\begin{flushleft}
$\matP\left[\exists H\in\calH: d_H(p,X_np)\geq C_2\log (n)\right]\leq$
\end{flushleft}
\begin{flushright}
 $ \sum_{H\in\calH}\matP\big[d_H(p,X_np)\geq C_2\log (n)|H\in A(X_n)\big]\matP[H\in A(X_n)]\leq$
\end{flushright}
\begin{flushright}
 $(Ke^{-C_2\log(n)/K})\sum_{H\in\calH}\matP[H\in A(X_n)]\leq (KTn) (Ke^{-C_2\log(n)/K}),$
\end{flushright}

where $T=\max\{d(p,gp):g\in supp(\mu)\}$. The last inequality follows from the observation that the random variable $|A(\cdot)|$ is the sum of the indicator functions $1_{H\in A(\cdot)}$, so that 
$$\sum_{H\in\calH} \matP[H\in A(X_n)]=\matE[|A(X_n)|].$$
We can then clearly choose $C_2$ large enough that is satisfies the lemma.
\par
$(1)$ The proof is similar to that of \cite[Lemma 6.2]{Si-contr}.
We want to show that there exists $K'$ so that 
$$\matP\left[d_H(p,X_np)\geq l+K'\right]\leq K' \matP\left[d_H(p,X_np)\in [l-K',l+K')\right],$$
which then implies the exponential decay we are looking for (just as in \cite[Lemma 6.2]{Si-contr}). Namely, one readily shows inductively
$$\matP\left[d_H(p,X_np)\geq (2i+1)K'\right]\leq (1+1/K')^{-i}\matP[d_H(p,X_np)\geq K'],$$
just using the inequality above for $l=(2i+2)K'$:
\begin{flushleft}
 $\matP\left[d_H(p,X_np)\geq (2i+3)K'\right]\leq$
\end{flushleft}
\begin{flushright}
 $ K'(\matP\left[d_H(p,X_np)\geq (2i+1)K'\right]-\matP\left[d_H(p,X_np)\geq (2i+3)K'\right]).$
\end{flushright}

For later purposes, we fix distinct $H_1,H_2\in\calH$. For $w$ a word in the elements of the support of $\mu$, denote by $g(w)$ the corresponding element of $g$. Let $w$ be a word of length $n$ so that $d_H(p,g(w)p)\geq l+100K_1$, for some large enough $K_1$. As $\pi_H$ is coarsely Lipschitz, the minimal subword $w_1$ of $w$ so that $d_H(p,g(w_1)p)\geq l$ actually satisfies $d_H(p,g(w_1)p)\in [l,l+K_1]$. Let $w_2$ be the subword of $w$ starting right after $w_1$, of length $K_2$ (where $K_2$ is a large enough constant) and let $w_3$ be the final subword of $w$ starting after $w_2$.

\begin{figure}[h]
 \includegraphics[scale=0.6]{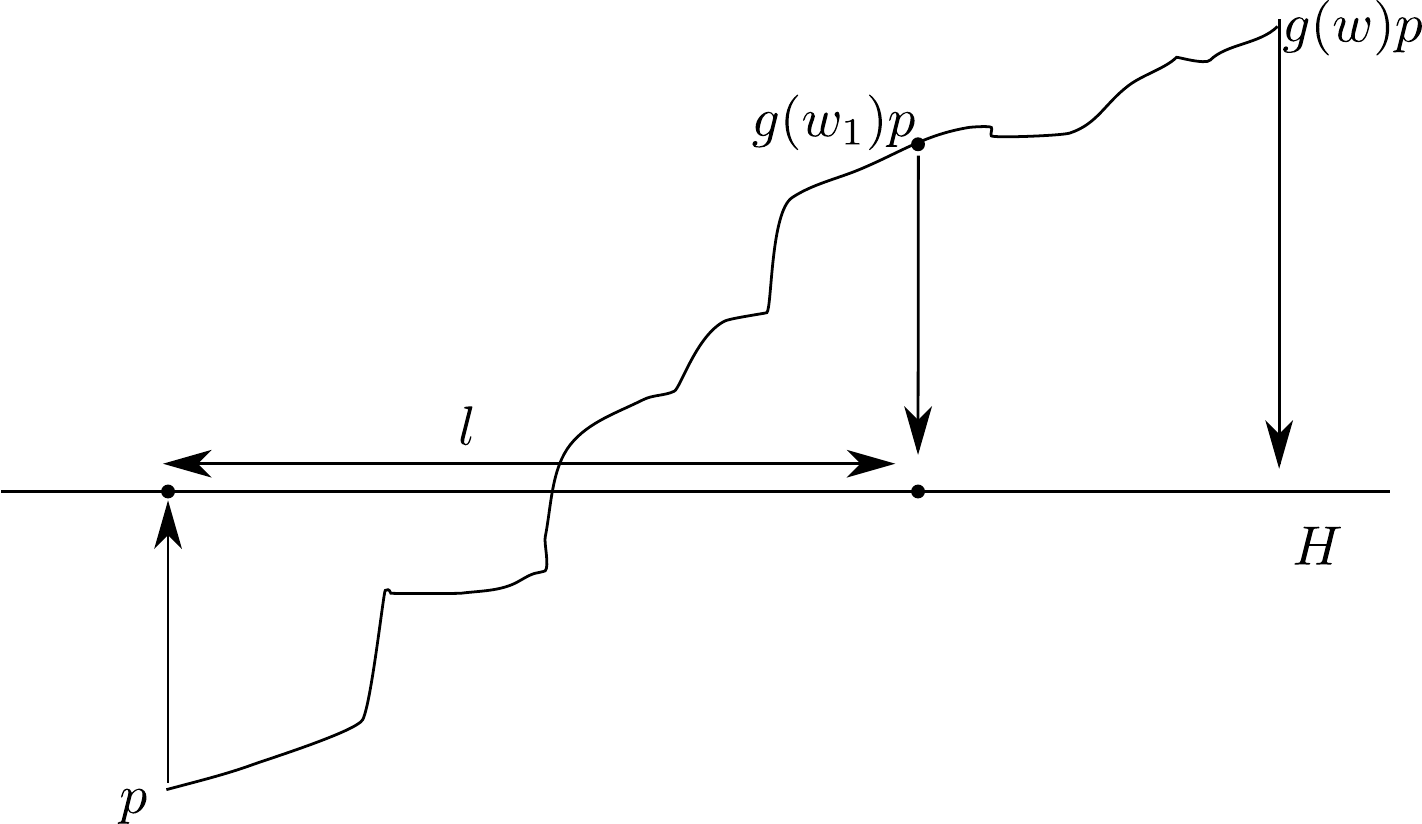}
\end{figure}

We claim that substituting $w_2$ by a suitable word $w_2'$ of the same length we can make sure that $d_{g(w_1) H_i}(H, g(w_1w'_2w_3)p)$ is larger than $B$ as in Corollary \ref{B-type}, where $i$ is chosen so that $g(w_1)H_i\neq H$. (The word $w'_2$ depends on $w$.)

Indeed, if $w'_2$ represents an element in the stabiliser of $H_i$ then (up to bounded additive error)
$$d_{g(w_1) H_i}(H,g(w_1w'_2w_3)p)=d(\pi_{H_i}(g(w_1)^{-1}H),g(w'_2)\pi_{H_i}(g(w_3)p)),$$
so that we can just ``push'' $\pi_{H_i}(g(w_3)p)$ far from $\pi_{H_i}(g(w_1)^{-1}H)$ using $g(w'_2)$ if they happen to be close, and choose $w'_2$ so that $g(w'_2)$ is at distance at most $1$ from the identity otherwise. (The hypotheses that $H_i$ has infinite diameter tells us that there is ``enough space'' to do so.) 

\begin{figure}[h]
 \includegraphics[scale=0.6]{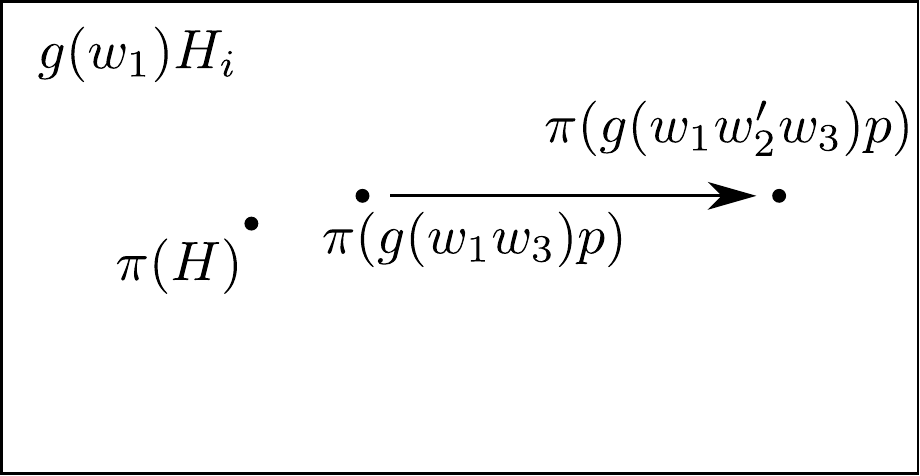}
\caption{$\pi$ denotes $\pi_{g(w_1)H}$.}
\end{figure}

In particular, keeping into account that $d(g(w_1)p,g(w_1)H_i)$ and hence $d_H(g(w_1)p,g(w_1)H_i)$ is bounded, we see that $d_H(g(w_1w'_2w_3)p,g(w_1)p)$ is bounded by Corollary \ref{B-type}. Hence $d_H(p,g(w_1w'_2w_3)p)\in [l-K_3,l+K_3)$, for a suitable $K_3$.

To sum up, we constructed for any word $w$ of length $n$ so that $d_H(p,g(w)p)\geq l+100K_1$ another word $w'=w_1w_2'w_3$ of length $n$ so that $d_H(p,g(w')p)\in [l-K_3,l+K_3)$, and the map $w\mapsto w'$ is easily seen to be bounded-to-1, as the decomposition of $w'$ as $w_1w'_2w_3$ is uniquely determined by the definition of $w_1$ and the length of $w'_2$, which is some fixed constant. This then gives the desired inequality as the support of $\mu$ is finite and hence for each $s,s'$ in the support $\mu(s)/\mu(s')$ is uniformly bounded.
\par
$(2)$ For $K$ large, we can assign to each $H\in A(g)$ a subgeodesic $\gamma_H$ of any given geodesic $\gamma$ from $p$ to $gp$ so that $l(\gamma_H)\geq 1$ and distinct $\gamma_H$'s are disjoint, such subgeodesic being just the deep component along $H$, see Lemma \ref{transprop}-(2)-(4).
\end{proof}

Finally, we show that subwalks of at least logarithmic length are expected to make linear progress (the definition of linear progress is above Theorem \ref{logtrk}). The lemma will also be used later.

\begin{lemma}
\label{linsubprog}
Let $G$ be a group acting on the metric space $X$ and suppose that the random walk $\{X_n\}$ on $G$ makes linear progress on $X$.
 Then for each $p\in X$ there exists $C_3$ so that for each $n\geq 1$
 $$\matP\left[\exists i,j\leq n: |i-j|\geq C_3\log (n), d(X_ip,X_{j}p)\leq |i-j|/C_3\right]\, \leq\, C_3 n^{-k}.$$
\end{lemma}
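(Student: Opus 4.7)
The plan is to reduce the ``for every sub-walk'' statement to the single-walk linear progress hypothesis by stationarity, and then apply a polynomial union bound. Since $G$ acts by isometries (as in the overall setup of Theorem \ref{logtrk}), for $i<j$ we have
$$d(X_ip,X_jp)=d(p,X_i^{-1}X_jp),$$
and the increment $X_i^{-1}X_j=g_{i+1}\cdots g_j$ is a product of i.i.d.\ $\mu$-samples, hence distributed as $X_{j-i}$. Consequently $d(X_ip,X_jp)$ and $d(p,X_{j-i}p)$ have the same distribution, and the linear progress bound applies to each sub-walk exactly as it does to the walk started at the identity.

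Concretely, let $C_0\geq 1$ be the constant from the definition of linear progress, and set $C_3:=\max\{C_0,\,C_0(k+2)\}$. For any pair of indices with $m:=|j-i|\geq C_3\log n$, the inclusion $C_3\geq C_0$ gives
$$\{d(X_ip,X_jp)\leq m/C_3\}\subseteq\{d(X_ip,X_jp)\leq m/C_0\},$$
so the distributional reduction and the linear progress hypothesis yield
$$\matP[d(X_ip,X_jp)\leq m/C_3]\leq \matP[d(p,X_mp)\leq m/C_0]\leq C_0 e^{-m/C_0}\leq C_0\, n^{-(k+2)}.$$
Taking a union bound over the at most $n^2$ pairs $i,j\leq n$ produces total probability at most $C_0\, n^{-k}$, which is the required inequality up to enlarging the final constant. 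For the handful of small $n$ where $C_3\log n<1$ (so that the event is vacuous, or the bound $C_3n^{-k}\geq 1$ is trivial) nothing needs to be proved after a further enlargement of $C_3$.

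I do not expect a genuine obstacle here: the lemma is essentially the routine observation that $O(n^2)$ events of exponentially small probability combine to a polynomially small total. The only subtlety is the direction of the inequality $C_3\geq C_0$, which is needed so that the event at the finer threshold $m/C_3$ lies inside the one directly controlled by the linear progress hypothesis at threshold $m/C_0$.
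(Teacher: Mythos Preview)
Your proof is correct and follows essentially the same approach as the paper: reduce to the single-walk estimate via the distributional identity $d(X_ip,X_jp)\overset{d}{=}d(p,X_{j-i}p)$ and then take a union bound. The only cosmetic difference is that the paper sums the exponential tail as a geometric series over the gap $j-i$, whereas you bound each of the at most $n^2$ terms by $C_0n^{-(k+2)}$; both bookkeepings yield the required polynomial decay.
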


\emph{Proof.} As $\{X_n\}$ makes linear progress there exists $K_1$ so that
 $$\matP\big[d(p,X_jp)\leq j/K_1 \big]\leq K_1 e^{-j/K_1}.$$
 Notice that for each $i<j$ we have $\matP[d(X_ip,X_{j}p)\leq t]=\matP[d(p,X_i^{-1}X_{j}p)\leq t]=\matP[d(p,X_{j-i}p)\leq t]$. Summing the linear progress inequality for $i$ ranging from $1$ to $n$ and $j\geq i+(k+1)K_1\log n$ we get that the probability in the statement is at most
\[
 \pushQED{\qed}
\sum_{i=1}^n \sum_{l\geq (k+1)K_1 \log(n)} K_1 e^{-l/K_1}= n \sum_{i\geq 0} K_1 n^{-(k+1)}e^{-i/K_1}.\qedhere
 \popQED
\]

\subsection{Proof of Theorem \ref{logtrk}} The statement for $[p,X_np]$ follows from the one for $trans([p,X_np])$ in view of Lemma \ref{smallproj} and Lemma \ref{transprop}-(3) (or otherwise directly showing that the transient set is logarithmically dense in the corresponding geodesic). Fix $n\geq 2$, and denote by $C_i$ suitable constants that do not depend on $n$. Consider a sample path $\{w_i\}_{i\leq n}$ of the random walk. By Lemma \ref{div}, we have that each transient point on a geodesic from $p$ to $w_np$ is logarithmically close to $\{w_ip\}$, say at distance at most $C_4\log(n)$.
Now, assume that for all $H\in \calH$ we have $d_H(p,w_np)\leq C_2\log(n)$ and for all $i,j$ with $|i-j|\geq C_3\log(n)$ we have $d(w_ip,w_jp)\geq |i-j|/C_3$, where $C_2,C_3$ are as in Lemma \ref{smallproj} and Lemma \ref{linsubprog}. The lemmas tell us that we can safely disregard sample paths not satisfying these properties. Suppose that for some $j$ we have $d(w_jp, trans([p,w_np]))>C_4\log(n) $, so that any transient point on $[p,w_np]$ is $C_4\log(n)$-close to either $\{w_ip\}_{i<j}$ or $\{w_i\}_{i>j}$.
If $p'\in trans([p,w_np])$ is the closest point to $w_np$ satisfying $d(p',\{w_ip\}_{i<j})\leq C_4\log(n)$, then we also have $d(p', \{w_ip\}_{i>j})\leq C_5\log(n)$, as $trans([p,w_np])$ has at most logarithmic ``gaps''. Hence, there are $i_0<j<i_1$ so that $d(w_{i_0}p,w_{i_1}p)\leq (C_4+C_5)\log(n)=C_6\log(n)$.

\begin{figure}[h]
 \includegraphics[scale=0.7]{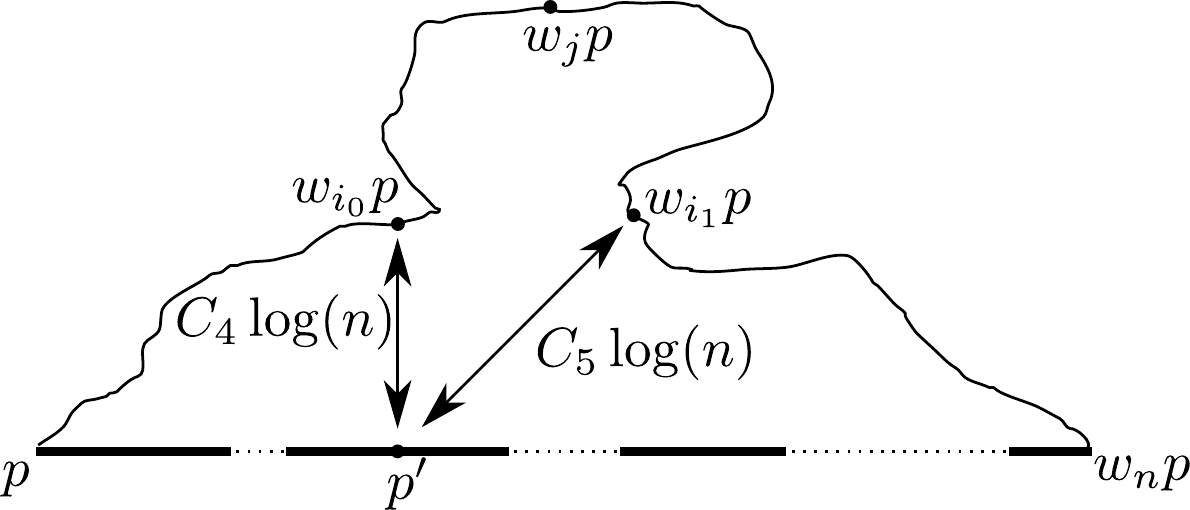}
\caption{$i_1-i_0$ cannot be large because $w_{i_0}p$ is close to $w_{i_1}p$.}
\end{figure}

By Lemma \ref{linsubprog} either $i_1-i_0< C_3\log(n)$ or $d(w_{i_0}p,w_{i_1}p) \geq (i_1-i_0)/C_3$, in which case $C_6\log(n)\geq d(w_{i_0}p,w_{i_1}p) \geq (i_1-i_0)/C_3$. Hence, in any case we have $i_1-i_0\leq C_7\log(n)$. Therefore,
\begin{align*}
 d(w_jp,trans([p,w_np]))&\leq d(w_jp,w_{i_0}p)+d(w_{i_0}p,trans([p,w_np]))\\
& \leq (C_8+C_4)\log(n),
\end{align*}
and this completes the proof.\qed

\subsection{Applications}
\label{appl}

Given three points $x_1,x_2,x_3$ in a geodesic metric space denote by $\delta(x_1,x_2,x_3)$ the supremum of the thinness constants of geodesic triangles with vertices $x_1,x_2,x_3$, i.e.
$$\sup \{d(p,[x_{i-1},x_i]\cup[x_{i+1},x_{i-1}])\},$$
where the supremum is taken over all choices of geodesics $[x_i,x_{i+1}]$ and all $p\in [x_i,x_{i+1}]$ for $i=1,2,3$ (we take indices modulo $3$).

\begin{defn}
 Let $\omega:\N\to\N$ be a function. We will say that \emph{random triangles in the group $G$ are $\omega$-thin} if whenever $\{X_n\},\{Y_n\},\{Z_n\}$ are independent simple random walks on $G$ with respect to the same generating system we have $\matE[\delta(X_n,Y_n,Z_n)]= O(\omega)$. Also, we say that \emph{the Gromov product of random points in $G$ is $\omega$-small} if whenever $\{X_n\},\{Y_n\}$ are independent simple random walks on $G$, again with respect to the same generating system, we have $\matE\left[\sup_{[X_n,Y_n]} d(1,[X_n,Y_n])\right]=O(\omega)$.
\end{defn}

A related notion called statistical hyperbolicity has been considered in \cite{DLM-stathyp,DDM-stathypteich}, where it is proven for hyperbolic groups and Teichm\"{u}ller spaces.

\begin{thm}
 Random triangles in any given non-trivial relatively hyperbolic group $G$ are $\log(n)$-thin. Also, the Gromov product of random points in $G$ is $\log(n)$-small.
\end{thm}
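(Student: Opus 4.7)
The plan is to reduce both statements to Theorem \ref{logtrk} via a \emph{concatenation trick}. Given independent simple random walks $\{X_j\}_{j\leq n}$ and $\{Y_j\}_{j\leq n}$ with step distribution $\mu$, I consider the path obtained by traversing the $X$-walk in reverse from $X_n$ down to $X_0=1$ and then the $Y$-walk forward to $Y_n$. Its increments are $g_n^{-1},\ldots,g_1^{-1},h_1,\ldots,h_n$, where $X_j=g_1\cdots g_j$ and $Y_j=h_1\cdots h_j$; by symmetry of $\mu$ and independence of the two walks, these $2n$ increments are i.i.d.\ $\mu$. Consequently, after left-translating by $X_n^{-1}$, the concatenated path is distributed as a $\mu$-walk of length $2n$ from $1$ to $X_n^{-1}Y_n$. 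Applying Theorem \ref{logtrk} to this walk at time $2n$ and translating back, I get that for any preassigned $k$ the concatenated path from $X_n$ to $Y_n$ is $O(\log n)$-Hausdorff-close to every geodesic $[X_n,Y_n]$ with probability $1-O(n^{-k})$.

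The Gromov product statement is then immediate: $1=X_0=Y_0$ is itself a vertex of the concatenated path, so the Hausdorff bound forces $d(1,[X_n,Y_n])=O(\log n)$ on the good event, uniformly over all geodesics $[X_n,Y_n]$. The deterministic bound $d(1,[X_n,Y_n])\leq \min(|X_n|,|Y_n|)\leq n$ absorbs the $O(n^{-k})$ bad event once $k$ is large enough, yielding $\matE[\sup_{[X_n,Y_n]}d(1,[X_n,Y_n])]=O(\log n)$.

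For the thinness statement, I would run the concatenation argument simultaneously for the three pairs of walks $(X,Y)$, $(Y,Z)$, $(X,Z)$. On the intersection of the three good events (still of probability $1-O(n^{-k})$), pick any $p\in[X_n,Y_n]$ and let $q$ be a vertex of the $X\!\to\!1\!\to\!Y$ concatenation within $O(\log n)$ of $p$. If $q\in\{X_j\}_{j\leq n}$, then $q$ is also a vertex of the $X\!\to\!1\!\to\!Z$ concatenation and so is $O(\log n)$-close to $[X_n,Z_n]$; if instead $q\in\{Y_j\}_{j\leq n}$, then $q$ lies on the $Y\!\to\!1\!\to\!Z$ concatenation and is $O(\log n)$-close to $[Y_n,Z_n]$. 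Either way $p$ is $O(\log n)$-close to $[X_n,Z_n]\cup[Y_n,Z_n]$, and the symmetric statements for the other two sides bound $\delta(X_n,Y_n,Z_n)$ by $O(\log n)$ on the good event. Handling the bad event with the trivial bound $\delta\leq 2n$ yields $\matE[\delta(X_n,Y_n,Z_n)]=O(\log n)$.

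The only delicate point is the distributional identification that powers the concatenation trick: the reverse-then-forward path must genuinely be a single $\mu$-walk of length $2n$, which is where the symmetry of $\mu$ (automatic for simple random walks) comes in. Beyond this, the argument is just bookkeeping on top of Theorem \ref{logtrk}, so I do not expect any substantial obstacle.
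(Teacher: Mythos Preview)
Your proposal is correct and follows essentially the same route as the paper: concatenate the reversed $X$-walk with the $Y$-walk (using symmetry of $\mu$), apply Theorem~\ref{logtrk} to the resulting length-$2n$ walk, and deduce both statements from the resulting Hausdorff closeness between $[X_n,Y_n]$ and $\{X_i,Y_i\}_{i\le n}$. The paper's proof is terser---it states the concatenation identity and the probability estimate and then says ``Both conclusions easily follow''---whereas you spell out the case split $q\in\{X_j\}$ versus $q\in\{Y_j\}$ for thinness and the absorption of the bad event into the expectation, but the underlying argument is the same.
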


\begin{proof}
 As the random walks we are considering are independent and symmetric, we can concatenate them to obtain a longer random walk, meaning that the distribution of $X_j^{-1}Y_k$ is the same as that of $X_{j+k}$, and similarly for the other pairs.
 By Theorem \ref{logtrk} and the observation above  we have an appropriate constant $C$ so that
$$\matP[d_{Haus}([X_n,Y_n],\{X_i,Y_i\}_{i\leq n})\geq C\log(n)]=$$
$$\matP[d_{Haus}([1,X_n^{-1}Y_n],\{X^{-1}_{n}X_i,X^{-1}_nY_i\}_{i\leq n})\geq C\log(n)]$$
goes to $0$ faster than, say, $1/n^2$, and similarly for the other pairs. Both conclusions easily follow.
\end{proof}

\begin{rem}
 Notice that a similar statement holds for random polygons as well. Also, the theorem can be generalised to group actions on relatively hyperbolic spaces. 
\end{rem}

Recall that given a group $G$ and a generating system for $G$, a (discrete) combing $\Gamma$ is a choice, for each $x\in G$, of a discrete path $\Gamma(x)$ connecting $1$ to $x$ in the Cayley graph of $G$. Given a finitely presented group $G$ and a (discrete) loop $\alpha$ in its Cayley graph, we will denote by $Fill(\alpha)$ the area of a minimal van Kampen diagram whose boundary is $\alpha$.

\begin{defn}
 Let $\Gamma$ be a combing on the finitely presented group $G$ and $\{X_i\}$ a simple random walk on $G$. The \emph{average Dehn function} $\delta_{avg}^{G,\Gamma, \{X_i\}}$ of $G$ with respect to $\Gamma$ and $\{X_i\}$ is
 $$\delta_{avg}^{G,\Gamma, \{X_i\}}(n)=\matE[Fill(\{X_0,\dots,X_n\}\cup \Gamma(X_n))].$$
\end{defn}

In \cite{Y-avdehnnilp} another notion of average Dehn function is considered and it is shown that for most nilpotent groups this function is subasymptotic to the Dehn function. Other related results can be found in \cite{KMS-avcompl,BV-meandehn}.

\begin{thm}
 Suppose that $G$ is hyperbolic relative to proper subgroups with at most polynomial Dehn function. Then for each geodesic combing $\Gamma$ and every simple random walk $\{X_i\}$ on $G$ we have
 $$\delta_{avg}^{G,\Gamma, \{X_i\}}(n)=O(\, n\, \delta(\log (n))\, ),$$
 where $\delta$ is the maximum of the Dehn functions of the peripheral subgroups.
\end{thm}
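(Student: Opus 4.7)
The plan is to combine Theorem~\ref{logtrk} with the standard fact that, in a relatively hyperbolic group $G$ whose peripheral Dehn functions are bounded by a polynomial $\delta$, the Dehn function $\delta_G$ of $G$ itself satisfies $\delta_G(\ell)=O(\delta(\ell))$ (the relevant structural result, essentially due to Osin, reduces filling a loop in $G$ to filling loops within peripheral cosets plus a hyperbolic correction). In particular $\delta_G$ is polynomial, so there is a polynomial $P$ with $Fill(\alpha)\leq P(|\alpha|)$ for every loop $\alpha$ in $G$.

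First I would write the combing as $\Gamma(X_n)=(g_0,g_1,\dots,g_m)$ with $g_0=1$, $g_m=X_n$, $m=d(1,X_n)$, and apply Theorem~\ref{logtrk} to pick $C_0$ so that the event
\[
A_n\;=\;\Bigl\{\sup_{[1,X_n]} d_{Haus}\bigl(\{X_i\}_{i\leq n},\Gamma(X_n)\bigr)\leq C_0\log n\Bigr\}
\]
satisfies $\matP[A_n^c]\leq C_0\,n^{-k}$, choosing $k$ so large that $n^{-k}P(2n)=o(n\,\delta(\log n))$; this is possible because $P$ is a fixed polynomial.

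On $A_n$ I would perform a strip decomposition between the sample path and the combing geodesic. For each $i\in\{0,\dots,n\}$ I select $\phi(i)\in\{0,\dots,m\}$ with $d(X_i,g_{\phi(i)})\leq C_0\log n$, imposing $\phi(0)=0$ and $\phi(n)=m$, and I join $X_i$ to $g_{\phi(i)}$ by a geodesic rib $\gamma_i$. Since $\Gamma(X_n)$ is itself a geodesic, the triangle inequality forces
\[
|\phi(i+1)-\phi(i)|=d(g_{\phi(i)},g_{\phi(i+1)})\leq 2C_0\log n+1,
\]
so each of the $n$ sub-loops
\[
\ell_i\;=\;(X_i,X_{i+1})\cdot \gamma_{i+1}\cdot (g_{\phi(i+1)},g_{\phi(i+1)\pm 1},\dots,g_{\phi(i)})\cdot \gamma_i^{-1}
\]
has perimeter $O(\log n)$. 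Because the rib $\gamma_i$ appears with opposite orientations in $\ell_{i-1}$ and $\ell_i$, the product $\ell_0\ell_1\cdots\ell_{n-1}$ represents the full boundary word $\{X_i\}_{i\leq n}\cup\Gamma(X_n)^{-1}$, and subadditivity of van Kampen area yields
\[
Fill\bigl(\{X_i\}_{i\leq n}\cup\Gamma(X_n)\bigr)\;\leq\;\sum_{i=0}^{n-1}\delta_G(|\ell_i|)\;=\;n\cdot O(\delta(\log n))\;=\;O\bigl(n\,\delta(\log n)\bigr).
\]

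On the complementary event I fall back on the trivial bound $Fill\leq P(2n)$, contributing at most $\matP[A_n^c]\cdot P(2n)=O(n^{-k}\cdot\mathrm{poly}(n))$ to the expectation, which by the choice of $k$ is $o(n\,\delta(\log n))$. Summing the two contributions gives $\delta_{avg}^{G,\Gamma,\{X_i\}}(n)=O(n\,\delta(\log n))$.

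The main obstacle I anticipate is the structural bound $\delta_G(\ell)=O(\delta(\ell))$: this is where relative hyperbolicity enters in an essential way, and it is ultimately why the peripheral rather than the ambient Dehn function appears in the conclusion. The remaining steps — namely the telescoping of the quadrilaterals $\ell_i$ (which goes through cleanly even if $\phi$ is not monotone, since only the perimeter of each $\ell_i$ matters) and the high/low probability splitting — are standard once Theorem~\ref{logtrk} is in hand.
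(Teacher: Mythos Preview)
Your proposal is correct and follows essentially the same argument as the paper: condition on the logarithmic tracking event from Theorem~\ref{logtrk}, decompose the loop into $n$ quadrilaterals of perimeter $O(\log n)$ via geodesic ribs from each $X_i$ to a nearby point on $\Gamma(X_n)$, invoke the Farb/Osin result that $\delta_G\asymp\delta$ to bound each area by $O(\delta(\log n))$, and absorb the bad event using that $\delta_G$ is polynomial. The only cosmetic differences are that the paper lands the ribs on $trans(\Gamma(X_n))$ rather than on $\Gamma(X_n)$ itself (either works by the ``Moreover'' clause of Theorem~\ref{logtrk}) and is slightly more terse about the complementary event; your version is in fact a bit more explicit on both the bound $|\phi(i+1)-\phi(i)|\leq 2C_0\log n+1$ and the telescoping.
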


\begin{proof}
 Fix $n\geq 2$. By Theorem \ref{logtrk}, we can take the expected value that defines $\delta_{avg}$ conditioned on $d_{Haus}(trans(\Gamma(X_n)),\{X_i\}_{i\leq n})\leq C\log(n)$ for some appropriate $C$. We are allowed to do so because the Dehn function of $G$ is (equivalent to) $\delta$ \cite{Fa-relhyp}, so that by choosing $C$ large enough we can make sure that $\matP[d_{Haus}(trans(\Gamma(X_n)),\{X_i\}_{i\leq n})> C\log(n)]$ decays much faster than the inverse of the Dehn function of $G$ (informally speaking, the loops we are disregarding are too few to contribute to the average Dehn function). Choose discrete geodesics $\alpha_i$ connecting $X_i$ to $trans(\Gamma(X_n))$ of length at most $C\log(n)$ (choose $\alpha_0$ and $\alpha_n$ to be trivial), and consider discrete loops $l_i$ obtained concatenating $\alpha_i$, a subgeodesic of $\Gamma(X_n)$ and $\alpha_{i+1}^{-1}$. Each of these loops has area at most $K\delta(\log(n))$ for some suitable $K$. As there are $n-1$ such loops, and from fillings of all of them we can recover a filling of the full path, we get the desired bound.
\end{proof}

It would be interesting to know whether the same result holds for the notion of average Dehn function defined in terms of the uniform distribution on loops.

\section{Mapping class groups}

\begin{conv}
\label{conv}
 From now on, all surfaces will be assumed to be orientable, connected, of finite type and to have empty boundary.
\end{conv}

Recall that the complexity of a surface $S$ is $3g+p-3$, where $g$ is the genus of $S$ and $p$ the number of punctures. In particular a surface has complexity at least 2 if it is not a sphere with at most four punctures or a torus with at most one puncture.
In this section we show the following. 

\begin{thm}
\label{trkmcg}
 Let $G$ be the mapping class group of a surface of complexity at least 2, let $\mu$ be a finitely supported probability measure on $G$ and let $\{X_n\}$ be the corresponding random walk on $G$. Then
$$\matE\left[\sup_{\gamma(X_n)} d_{Haus}(\{X_i\}_{i\leq n}, \gamma(X_n))\right] =O(\sqrt{n\log(n)}),$$
where the supremum is taken over all hierarchy paths and geodesics $\gamma(X_n)$ in a given word metric from $1$ to $X_n$.

More precisely, for each $k$ there exists $C$ so that for all $n\geq 1$
$$\matP\left[sup_{\gamma(X_n)} d_{Haus}(\{X_i\}_{i\leq n}, \gamma(X_n))\geq C\sqrt{n\log(n)}\right] \leq Cn^{-k}.$$
\end{thm}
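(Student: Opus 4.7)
The plan is to follow the three-step structure of the proof of Theorem \ref{logtrk}, substituting Proposition \ref{prop:quadr} (quadratic divergence) for Lemma \ref{div} (exponential divergence); the $\sqrt{n\log n}$ rate will emerge from combining $O(\log n)$-bounded subsurface projections with the square-root-of-length divergence estimate. The first step is to show that with probability at least $1-Cn^{-k}$ the pair $(1,X_n)$ is $O(\log n)$-bounded, i.e.\ all subsurface projections $d_Y(1,X_n)$ are at most $D_0\log(n)$ for a uniform constant $D_0$. This is the MCG analogue of Lemma \ref{smallproj}, and its proof will transfer essentially verbatim, using Behrstock's inequality for subsurface projections in place of Corollary \ref{B-type} and, in the ``push'' step, replacing elements of the stabiliser of a peripheral set by mapping classes supported on the subsurface $Y$ (which have unbounded orbit in $\CC(Y)$). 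The analogue of part (2) of Lemma \ref{smallproj}, bounding $|A(g)|\leq Kd(1,g)$, will follow from the Distance Formula.

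Conditional on this event, Proposition \ref{prop:quadr} will apply with $\alpha$ the sample path, whose length is at most $Tn$ for $T=\max_{g\in\mathrm{supp}(\mu)}d(1,g)$, and $D=O(\log n)$, giving
$$\sup_{p\in\g(X_n)}d(p,\{X_i\}_{i\leq n})\leq C\sqrt{D\cdot Tn}=O(\sqrt{n\log n})$$
with probability $1-Cn^{-k}$. This handles the ``geodesic-to-walk'' half of the Hausdorff distance.

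For the reverse direction I will parametrise $\g(X_n)$ by integer arclength, $\g\co\{0,1,\dots,L\}\to G$ with $L\leq Tn$. By the previous step, for each $k$ one may pick an index $i(k)\leq n$ with $d(X_{i(k)},\g(k))\leq M:=C'\sqrt{n\log n}$, choosing $i(0)=0$ and $i(L)=n$. Consecutive indices then satisfy $d(X_{i(k)},X_{i(k+1)})\leq 2M+1$, so Lemma \ref{linsubprog} (linear progress of the walk on MCG from nonamenability, reinforced if needed by Maher's curve-complex drift estimate) gives $|i(k)-i(k+1)|=O(\sqrt{n\log n})$ uniformly in $k$ with probability $1-Cn^{-k}$. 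Since $i$ travels from $0$ to $n$ in controlled jumps, for any $j\leq n$ there exists $k^{*}$ with $|i(k^{*})-j|=O(\sqrt{n\log n})$, whence
$$d(X_j,\g(X_n))\leq d(X_j,X_{i(k^{*})})+M\leq|j-i(k^{*})|+M=O(\sqrt{n\log n}),$$
completing the argument.

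The hardest step will be Step~1, namely transferring the Behrstock-inequality argument of Lemma \ref{smallproj} to subsurface projections. The main subtleties will be the asymmetry of Behrstock's inequality when $Y$ and $Z$ overlap rather than being disjoint, and producing, in the ``push'' argument, mapping classes supported on $Y$ from words in $\mathrm{supp}(\mu)$ while keeping the substitution $w\mapsto w'$ bounded-to-one. These are analogous to (and inspired by) the steps in the proof of Lemma \ref{smallproj}, but the curve-complex geometry of subsurfaces will replace the coset geometry of peripheral subgroups.
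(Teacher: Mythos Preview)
Your three-step outline matches the paper's argument, and Steps~2 and~3 are essentially what the paper does (the paper phrases Step~3 slightly differently, picking a single crossing point on $[1,w_n]$ rather than a full sequence $i(k)$, but the content is the same). The gap is in Step~1.

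You say that ``$(1,X_n)$ is $O(\log n)$-bounded'' means that every individual projection satisfies $d_Y(1,X_n)\le D_0\log n$. That is \emph{not} the hypothesis of Proposition~\ref{prop:quadr}: a pair $x,y$ is $D$-bounded when $\sum_{Z\subseteq Y}\{\{d_Z(x,y)\}\}_{3L_0}\le D$ for each proper $Y$, i.e.\ the marking-complex distance of the projections to $\calM(Y)$ is $O(D)$. Bounding each $d_Z$ by $\log n$ does not bound this sum by $\log n$: the number of contributing $Z\subseteq Y$ can itself grow (think of a product of many commuting twists, or recurse the Bounded Geodesic Image count through the levels --- you get $(\log n)^{\xi}$ at best). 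So even if your transfer of Lemma~\ref{smallproj} succeeds, it feeds the wrong hypothesis into Proposition~\ref{prop:quadr}.

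The paper handles Step~1 by a genuinely different route (Lemma~\ref{smallprojmcg}): apply Theorem~\ref{logtrk} to the $\calM(S)$-action on $\calC(S)$, so with high probability the sample path stays $O(\log n)$-close in $\calC(S)$ to a geodesic $\gamma$; then for any proper $Y$ the Bounded Geodesic Image Theorem forces $\partial Y$ to lie within $O(1)$ of some vertex of $\gamma$, hence within $O(\log n)$ of some $\pi_S(w_i)$. Only an $O(\log n)$-long central stretch of the walk can contribute to $\sigma_Y$ at all (the initial and final stretches project boundedly to every $Z\subseteq Y$), and since $\sigma_Y$ is coarsely Lipschitz this gives $\sigma_Y(1,X_n)=O(\log n)$ directly. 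No Behrstock-inequality surgery is used here; Maher's linear progress in $\calC(S)$ is what makes Theorem~\ref{logtrk} applicable.

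Your Behrstock-transfer idea does work for the weaker individual-projection statement --- the paper carries it out as Lemma~\ref{smallrndprj}, handling the overlap issue via the BBF finite-index subgroup whose orbits consist of pairwise-overlapping subsurfaces --- but that lemma is recorded separately precisely because it is not the input to Theorem~\ref{trkmcg}.
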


\subsection{Results from the literature}
We will assume that the reader is somewhat familiar with hierarchies, subsurface projections and related notions from \cite{MM1,MM2}, but we recall the main results we will need. We denote by $S$ a surface of complexity at least 2, and let $\calM(S)$ be its mapping class group (which for our purposes can be identified with the marking complex). With an abuse, when referring to a subsurface we will actually refer to its isotopy class and assume that it is connected and essential. For $Y\subseteq S$ a subsurface, $\pi_Y:\calM(S)\to 2^{\calC(Y)}$ will denote the subsurface projection on the curve complex $\calC(Y)$ of $Y$, which is hyperbolic \cite{MM1} (see \cite{HPW-unic} for a short and self-contained proof of this fact). Recall that $\pi_Y$ is coarsely Lipschitz \cite{MM1}. As customary, we also denote $\pi_Y$ the subsurface projection as a map from $\calC(S)$ to $2^{\calC(Y)}$. The surfaces $Y,Z$ are said to overlap if $\pi_Y(\partial Z), \pi_Z(\partial Y)\neq \emptyset$. We use the notation $d_Y(\mu,\nu)=diam_{\calC(Y)}(\pi_Y(\mu)\cup\pi_Y(\nu))$.

\begin{thm}[Behrstock Inequality, {\cite[Theorem 4.3]{Be-th}}]
\label{Behin}
There exists a constant $C$ so that if the subsurfaces $Y, Z\subseteq S$ overlap then for each $m\in\calM(S)$ we have
$$\min\{d_Y(\partial Z, m), d_Z(\partial Y,m)\}\leq C.$$
\end{thm}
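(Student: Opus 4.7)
The plan is to prove the inequality by contrapositive: assume $d_Y(\partial Z, m) > C_0$ for a sufficiently large constant $C_0$ to be chosen, and deduce a uniform upper bound on $d_Z(\partial Y, m)$. Since the base curves of the marking $m$ fill $S$, one of them, call it $\alpha$, essentially intersects $Y$ and satisfies $d_Y(\alpha, m) \leq O(1)$; the triangle inequality in $\calC(Y)$ then gives $d_Y(\alpha, \partial Z) > C_0 - O(1)$. It will suffice to exhibit, analogously, a base curve $\beta$ of $m$ with $d_Z(\beta, \partial Y) \leq O(1)$, since by coarse Lipschitz-ness of $\pi_Z$ on $\calM(S)$ this propagates to $m$.

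The first topological ingredient is the following observation: \emph{if a simple closed curve $\gamma$ essentially meets $Y$ and is disjoint from $\partial Z$, then $d_Y(\gamma, \partial Z) \leq 1$.} Indeed, being disjoint from $\partial Z$ forces $\gamma \subseteq Z$ or $\gamma \subseteq S \setminus Z$, so $\gamma \cap Y$ lies in a subsurface of $Y$ one of whose boundary components is a component of $\partial Z \cap Y$; hence $\pi_Y(\gamma)$ can be represented by a curve disjoint from $\pi_Y(\partial Z)$. Applied contrapositively to $\alpha$ for $C_0$ large enough, this forces $\alpha$ to essentially cross $\partial Z$, so $\alpha \cap Z$ is a non-empty collection of essential arcs and $\pi_Z(\alpha)$ is defined.

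The second step is to show $d_Z(\alpha, \partial Y) = O(1)$, after which we take $\beta = \alpha$. The idea is to perform the defining surgeries in $\pi_Z(\alpha)$ and $\pi_Z(\partial Y)$ in a compatible way. Since $Y$ and $Z$ overlap, $\partial Y \cap Z$ is a non-empty union of essential arcs, and these are precisely the non-$\partial Z$ boundary components of the subsurface $Y \cap Z \subseteq Z$. The arcs of $\alpha \cap Z$, being arcs controlled by the interaction of $\alpha$ with $Y \cap Z$ and its complement in $Z$, can be compared (after possibly reducing to the sub-case $\alpha \subseteq Y$ and treating the pieces of $\alpha$ outside $Y$ separately) to arcs of $\partial Y \cap Z$ by a bounded number of surgeries inside $Z$; the resulting curves are within bounded distance in $\calC(Z)$, where the bound depends only on the topological complexity of $S$.

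The main obstacle is the second step — matching the surgered curves cleanly. The cleanest route in my own attempt would be a short induction on complexity, handling separately the case where $\alpha$ stays inside $Y$ (so its arcs live in $Y \cap Z$ directly) and the case where $\alpha$ also meets $\partial Y$, in which intersections between $\alpha$ and $\partial Y$ can be absorbed into the bounded error. This is essentially the combinatorial-topological core of Behrstock's original hierarchy proof, and it is what makes the constant $C$ depend only on the topological type of $S$, not on $Y$, $Z$ or $m$.
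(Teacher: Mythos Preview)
The paper does not give its own proof of this result: it is quoted from \cite{Be-th}, with a parenthetical remark that Leininger's argument (written up in \cite{Man-unexpmcg}) yields the explicit constant $10$. So there is no in-paper proof to compare against, and the relevant question is whether your sketch stands on its own.

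Your overall strategy --- argue by contrapositive, pass from the marking $m$ to a single base curve $\alpha$, then compare projections in $Z$ --- is exactly the shape of Leininger's argument, and the steps up through forcing $\alpha$ to cross $\partial Z$ are essentially correct (though the ``$\leq 1$'' in your first topological observation should really be ``uniformly bounded'': disjoint arcs in $Y$ give surgered curves that may still intersect, just a bounded number of times). The genuine gap is precisely the step you yourself flag as the main obstacle, namely bounding $d_Z(\alpha,\partial Y)$. What you have written there is not a proof but a description of what a proof should accomplish, followed by a proposed induction on complexity with no details and no indication of what the inductive hypothesis would even say. Leininger's argument does not induct on complexity; it is a direct combinatorial argument in $Z$: from $d_Y(\partial Z,\alpha)$ large one first deduces that every arc of $\alpha\cap Y$ meets every arc of $\partial Z\cap Y$, and then one analyses an arc of $\alpha\cap Z$ together with the adjacent arcs of $\partial Y\cap Z$ to exhibit an essential arc in $Z$ disjoint from representatives of both $\pi_Z(\alpha)$ and $\pi_Z(\partial Y)$. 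Without carrying out something of this kind, your second step is unsubstantiated, and the proposal as written is incomplete.
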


(The constant can be chosen to be $10$ in view of a slick argument due to Leininger and written up in \cite{Man-unexpmcg}.)

We write $A\approx_{K,C} B$ if the quantities $A,B$ satisfy
$$A/K-C\leq B\leq KA+C.$$
Let $\{\{A\}\}_L$ denote $A$ if $A\geq L$ and $0$ otherwise.

\begin{thm}[Distance Formula, {\cite[Theorem 6.12]{MM2}}]
There exists $L_0$ with the property that for each $L\geq L_0$ there are $K,C$ so that, for each $\mu,\nu\in \calM(S)$,
 $$d_{\calM(S)}(\mu,\nu)\approx_{K,C} \sum_Y \{\{d_Y(\mu,\nu)\}\}_L,$$
where the sum is taken over all (isotopy classes of) subsurfaces $Y$.
\end{thm}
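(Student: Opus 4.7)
The plan is to establish the two inequalities implicit in $\approx_{K,C}$ separately. The ``easy'' direction, that the sum bounds $d_{\calM(S)}(\mu,\nu)$ from below, uses only the fact that each subsurface projection is coarsely Lipschitz together with the Behrstock Inequality (Theorem \ref{Behin}). The ``hard'' direction, that $d_{\calM(S)}(\mu,\nu)$ is bounded above by a constant times the sum, requires producing an explicit path of markings of controlled length, which I would do via Masur-Minsky's hierarchy machinery.

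For the lower bound $\sum_Y \{\{d_Y(\mu,\nu)\}\}_L \leq K\,d_{\calM(S)}(\mu,\nu) + C$, fix a sequence of elementary moves $\mu = m_0, \ldots, m_N = \nu$ realizing $d_{\calM(S)}(\mu,\nu)$. Since each $\pi_Y$ is coarsely Lipschitz, to each $Y$ with $d_Y(\mu,\nu)\geq L$ I attach a set $I(Y)\subseteq \{0,\dots,N-1\}$ of ``$Y$-active'' indices along which $\pi_Y(m_i)$ makes genuine progress toward $\pi_Y(\nu)$, with $|I(Y)|$ at least $d_Y(\mu,\nu)/K'$. The key combinatorial claim is that, for $L$ chosen large compared to the Behrstock constant, the sets $I(Y)$ for distinct overlapping subsurfaces are essentially disjoint: at any $m_i$, Theorem \ref{Behin} forces one of $d_Y(\partial Z, m_i)$ or $d_Z(\partial Y, m_i)$ to be bounded, so $Y$ and $Z$ cannot both be active at the same time. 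Summing over $Y$ yields $\sum_Y |I(Y)| \leq N + O(1)$, giving the claim.

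For the upper bound $d_{\calM(S)}(\mu,\nu) \leq K\sum_Y \{\{d_Y(\mu,\nu)\}\}_L + C$, I would construct a hierarchy $H$ of tight geodesics between $\mu$ and $\nu$. Such an $H$ consists of a main tight geodesic $g_S\subset \calC(S)$ connecting a curve in $\mu$ to one in $\nu$, together with a recursively compatible system of subordinate tight geodesics in $\calC(Y)$ for each ``component domain'' $Y$. Using hyperbolicity of the curve complexes and the Bounded Geodesic Image theorem, one shows that $Y$ appears as a component domain precisely when $d_Y(\mu,\nu)$ exceeds a fixed threshold, and the length of the $Y$-geodesic in $H$ is coarsely equal to $d_Y(\mu,\nu)$. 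From $H$ one then extracts a \emph{resolution}: a path of markings from $\mu$ to $\nu$ whose length is at most a constant times the total length of the subordinate geodesics in $H$, which is coarsely equal to $\sum_Y \{\{d_Y(\mu,\nu)\}\}_L$.

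The principal obstacle is the resolution step: turning the abstract hierarchy into a marking path while controlling its length. Each elementary move in the resolution must correspond to advancing along exactly one subordinate geodesic, so one must produce a coherent schedule that advances every geodesic the right number of times with no wasted moves. This relies on the time-ordering of component domains provided by tightness, on forward/backward subordination between hierarchy levels, and on the fact that (again by Behrstock) only boundedly many domains can be simultaneously active. The full argument for both directions is the content of sections 4-6 of \cite{MM2}.
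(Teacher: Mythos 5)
This statement is quoted from \cite{MM2} and is not proved in the present paper; it is cited as a black box (together with the Behrstock Inequality and the Bounded Geodesic Image Theorem) and used as input to the arguments about mapping class groups. So there is no ``paper's proof'' to compare against.

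As a sketch of the actual proof, what you write is a reasonable high-level account, but I would flag two points. First, you use the Behrstock Inequality for the lower bound, which is a modern reformulation rather than the original route in \cite{MM2}: Behrstock's inequality postdates that paper, and \cite{MM2} obtains both directions through the hierarchy machinery (existence, the ``large link'' lemma, and efficiency of resolutions). Your version is legitimate and is closer in spirit to later axiomatic treatments, but you should not attribute the Behrstock-based lower bound to sections 4--6 of \cite{MM2}.

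Second, the disjointness step in your lower bound is stated too loosely. Applying Theorem \ref{Behin} only at the single marking $m_i$ does not by itself force that overlapping $Y$ and $Z$ cannot both be active at time $i$; one also needs to apply it at the endpoints $\mu$ and $\nu$ and combine. Concretely: if $d_Y(\mu,\nu), d_Z(\mu,\nu)\geq L$ with $L$ large compared to the Behrstock constant $B$, then Behrstock at $\mu$ and at $\nu$ pins down (up to relabelling) $d_Z(\partial Y,\mu)\leq B$ and $d_Y(\partial Z,\nu)\leq B$, and only then does Behrstock at $m_i$ force $m_i$ to be within $2B$ of an endpoint in either the $Y$- or the $Z$-coordinate, ruling out simultaneous ``middle-third'' activity. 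Even so, this only handles \emph{overlapping} pairs; to conclude $\sum_Y |I(Y)|\lesssim N$ you also need the purely topological fact that a collection of pairwise non-overlapping (disjoint or nested) essential subsurfaces of $S$ has uniformly bounded cardinality, which bounds the number of simultaneously active domains. With those two additions the lower bound goes through; the upper bound via hierarchies and resolutions is indeed the heart of \cite{MM2} as you describe.
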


For notational convenience, from now on we denote sums over all subsurfaces of a surface $S$ simply by $\sum_{Y\subseteq S}$.

\begin{thm}[Bounded Geodesic Image Theorem, {\cite[Theorem 3.1]{MM2}}]
 There exists $C$ with the following property. If $\gamma$ is a geodesic in $\calC(S)$ so that for some proper subsurface $Y$ we have that $\pi_Y(v)$ is non-empty for every vertex $v\in\gamma$ then $\pi_Y(\gamma)$ has diameter at most $C$.
\end{thm}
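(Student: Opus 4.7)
The plan has two stages: a local Lipschitz-type estimate for $\pi_Y$ along edges of $\calC(S)$, and a global rigidity step that collapses the resulting linear bound to a uniform one.

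For the local stage I would first show that if $v,w\in\calC(S)$ are joined by an edge (realised as disjoint simple closed curves on $S$) and both satisfy $\pi_Y(\cdot)\neq\emptyset$, then $d_Y(v,w)\leq 2$. This is a short surgery argument: realise $v$ and $w$ disjointly on $S$, restrict each to $Y$, and perform the canonical arc-to-curve surgery with $\partial Y$ that defines $\pi_Y$; the resulting curves on $Y$ are either disjoint or share a boundary component of $Y$, giving $\calC(Y)$-distance at most $2$. Applied to consecutive vertices of $\gamma=(v_0,\dots,v_n)$ this yields $\mathrm{diam}(\pi_Y(\gamma))\leq 2n$, a bound that grows linearly in the length of $\gamma$ and must still be upgraded to a uniform constant.

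For the global stage I would argue by contradiction. Suppose $d_Y(v_0,v_n)$ is very large, larger in particular than the hyperbolicity constant of $\calC(Y)$. The key observation is that any simple closed curve lying entirely inside $Y$ is disjoint from $\partial Y$, hence within $\calC(S)$-distance $1$ of $\partial Y$. Taking a $\calC(Y)$-geodesic $\beta$ joining vertices of $\pi_Y(v_0)$ and $\pi_Y(v_n)$ and regarding each vertex of $\beta$ as a curve on $S$ lifts $\beta$ to a path $\beta'\subseteq\calC(S)$ sitting in the $1$-neighbourhood of $\partial Y$. Combining this with hyperbolicity of $\calC(S)$ and a Morse/stability-type argument, the idea is to force $\gamma$ itself to fellow-travel the $1$-neighbourhood of $\partial Y$ on a long subarc, at which point the local step bounds the $\pi_Y$-diameter contributed by that subarc, contradicting the assumption.

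The main obstacle will be precisely this last step: turning the existence of the lifted path $\beta'$ near $\partial Y$ into a genuine constraint on the geodesic $\gamma$. The naive length comparison $d_{\calC(S)}(v_0,v_n)\leq\mathrm{length}(\beta')+O(1)$ only gives $d_Y(v_0,v_n)\lesssim n$, which is useless for a uniform bound. Overcoming this requires either a careful Morse argument in the $\delta$-hyperbolic $\calC(S)$ combined with a controlled projection to $\partial Y$, or, more cleanly, the unicorn-curve construction of Hensel--Przytycki--Webb, which produces explicit short $\calC(S)$-paths between $v_0$ and $v_n$ forced to pass $2$-close to $\partial Y$, directly reducing the global statement to the local one established in the first stage.
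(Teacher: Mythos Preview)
The paper does not prove this statement: it is quoted from the literature as \cite[Theorem 3.1]{MM2}, and the paper merely remarks that ``an elementary proof of this theorem can be found in \cite{We-bgi}''. So there is no proof in the paper to compare your proposal against.

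On the proposal itself: your local stage is essentially the standard coarse Lipschitz property of $\pi_Y$, and is correct (the constant $2$ may need to be relaxed slightly depending on conventions, especially in the annular case, but some uniform constant works). Your global stage, however, is not a proof as written, and you say as much: the Morse/stability argument you sketch does not obviously close. Knowing that the lifted path $\beta'$ sits in $N_1(\partial Y)$ tells you nothing useful about $\gamma$ unless you already control how $\gamma$ relates to $\partial Y$, and the only a priori relation is that $\gamma$ avoids $N_1(\partial Y)$---the wrong direction for a fellow-travelling conclusion. Hyperbolicity of $\calC(S)$ alone will not manufacture the needed constraint, since $\beta'$ is not known to be a quasi-geodesic in $\calC(S)$.

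The route you flag at the end---the Hensel--Przytycki--Webb unicorn construction---is exactly what \cite{We-bgi} does, and is the clean modern argument. If you want a self-contained proof, commit to that: build the unicorn path between $v_0$ and $v_n$, show that if no vertex of $\gamma$ is disjoint from $Y$ then no unicorn curve is either, and then apply your local Lipschitz step along the unicorn path (which has length comparable to $d_{\calC(S)}(v_0,v_n)$ but, crucially, whose $\pi_Y$-image is controlled directly by the surgery combinatorics rather than by its length). The original Masur--Minsky proof is considerably more involved, going through geometric limit arguments.
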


An elementary proof of this theorem can be found in \cite{We-bgi}.

\begin{rem}
\label{bgi}
 The consequence of the theorem we will more often use is that if $Y$ has complexity $\xi(S)-1$ and $\pi_Y(\gamma)$ has sufficiently large diameter then a component of $\partial Y$ appears in $\gamma$. More generally, if $\pi_Y(\gamma)$ has sufficiently large diameter then $Y$ is contained in $S\backslash v$ for some $v\in\gamma$ or it is an annulus around some such $v$.
\end{rem}

We also recall the following result on random walks due to Maher (not stated in full generality).

\begin{thm}\cite{Ma-linprog}\cite[Theorem 1.2]{Ma-expdecay}
\label{linprog}
Random walks on a mapping class group $\calM(S)$ of surfaces of complexity at least 2 whose finite support generates $\calM(S)$ make linear progress in the curve complex.
\end{thm}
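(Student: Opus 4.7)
The plan is to prove the two halves of the linear-progress statement — positive asymptotic drift of $\{X_n p\}$ in $\calC(S)$ and exponential decay for slow sample paths — as separate steps, following the strategies of \cite{Ma-linprog} and \cite{Ma-expdecay} respectively. Recall that linear progress means that for a base curve $p$ there exists $C_0\geq 1$ with $\matP[d_{\calC(S)}(p,X_n p)\leq n/C_0]\leq C_0 e^{-n/C_0}$, so both a deterministic positive speed and a concentration inequality must be produced.

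\textbf{Positive drift.} Since the finite support of $\mu$ generates $\calM(S)$, which for surfaces of complexity at least $2$ contains pairs of independent pseudo-Anosovs, the subgroup generated by the support acts non-elementarily on the Gromov-hyperbolic space $\calC(S)$. A Kaimanovich-type convergence theorem for random walks on groups acting non-elementarily on (possibly non-proper) hyperbolic spaces then yields that the sample path $\{X_n p\}$ converges almost surely to a point of $\partial\calC(S)$, giving a $\mu$-stationary hitting measure $\nu$ on $\partial\calC(S)$. A ping-pong argument applied to two independent pseudo-Anosovs in the generated subgroup shows $\nu$ has no atoms. Non-atomicity of $\nu$ together with hyperbolicity of $\calC(S)$ then forces, via the standard Gromov-product argument, almost-sure boundedness of the Gromov products $(X_n p\mid X_{2n} p)_p$; applying Kingman's subadditive ergodic theorem to the subadditive cocycle $n\mapsto d_{\calC(S)}(p,X_n p)$ produces a deterministic constant $L>0$ with $d_{\calC(S)}(p,X_n p)/n\to L$ almost surely.

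\textbf{Exponential decay.} To upgrade $L>0$ to the quantitative bound, I would split the walk into independent blocks of a fixed large length $N$, chosen so that a single block displacement $d_{\calC(S)}(X_{jN}p,X_{(j+1)N}p)$ exceeds $LN/3$ with probability at least, say, $9/10$. Positive-drift blocks then contribute additively to $d_{\calC(S)}(p,X_{kN}p)$ up to a bounded cancellation controlled by hyperbolicity of $\calC(S)$. One controls the cancellation via subsurface-projection tail bounds: a Behrstock-style argument analogous to Lemma \ref{smallproj}, using Theorem \ref{Behin} and the Bounded Geodesic Image Theorem (together with Remark \ref{bgi}), shows that $\matP[d_Y(p,X_n p)\geq \lambda]$ decays exponentially in $\lambda$ uniformly in subsurfaces $Y$. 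A Chernoff-type bound on the number of bad blocks in $n/N$ trials then produces the required $C_0 e^{-n/C_0}$ tail.

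The main obstacle is the exponential-decay step, because $\calC(S)$ is not locally finite and the block displacements $d_{\calC(S)}(X_{jN}p,X_{(j+1)N}p)$ are not uniformly bounded, so a naive Hoeffding/Chernoff argument cannot be applied directly. Moreover, one must rule out the possibility that the sample path lingers near some bounded subset of $\calC(S)$ for long stretches; doing so requires combining the non-atomicity of $\nu$ on $\partial\calC(S)$ with the subsurface-projection tail bounds above to show that such trapping events have exponentially small probability. Making this rigorous is essentially the technical content of \cite{Ma-expdecay}, and is the main input beyond the positive-drift argument of \cite{Ma-linprog}.
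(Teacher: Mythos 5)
The paper provides no proof of this statement: it is a black-box citation of Maher's two papers and is used as an input (for instance in Lemma~\ref{smallprojmcg}), so there is no internal argument to compare your sketch against. Your outline — Kaimanovich/Kingman for positive drift, then a block decomposition plus subsurface-projection tail bounds for the exponential tail — is a plausible high-level route, though it is closer in flavour to the later Maher--Tiozzo treatment of random walks on (possibly non-proper) hyperbolic spaces than to the shadow/recurrence arguments of the two cited papers. As you candidly acknowledge at the end, the crucial exponential-decay step is deferred to \cite{Ma-expdecay}, so your proposal effectively reduces to a citation with motivating commentary, which is exactly what the paper does.

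If you did try to fill the sketch in, two points would not survive as written. Kingman's theorem alone only yields a nonnegative drift $L\geq 0$; upgrading to $L>0$ is a separate argument, namely that zero drift would force a positive-measure set of paths to accumulate on a single boundary point and contradict non-atomicity of $\nu$ — it is not an immediate consequence of bounding the Gromov products as you describe. And the assertion that positive-drift blocks contribute additively "up to a bounded cancellation controlled by hyperbolicity" is precisely the content that needs proving: in a hyperbolic space the cancellation between consecutive blocks is governed by the Gromov products at the block boundaries, and showing these are small with exponentially high probability is the central technical difficulty Maher addresses, not a free consequence of hyperbolicity. Since the theorem is cited, neither issue affects the paper, but a self-contained proof would have to close both.
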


\subsection{Quantitative quadratic divergence}
\label{divsec}

Let $L_0$ be as in the Distance Formula and larger than the constant $C$ in the Bounded Geodesic Image Theorem. Similarly to \cite{Be-th}, we say that a pair $x,y\in \calM(S)$ is $D$-bounded if for each proper subsurface $Y$ of $S$ we have $\sum_{Z\subseteq Y} \{\{d_Z(x,y)\}\}_{3L_0}\leq D$.

We set the threshold (almost) arbitrarily, but for our purposes different thresholds give equivalent notions of boundedness, as we can see from the following lemma (a straightforward consequence of the Distance Formula in the case $Y=S$).

\begin{lemma}
\label{threshold}
 For each $L\geq L_0$ there exists $C$ so that for each $x,y\in\calM(S)$ and $Y\subseteq S$ we have
$$\sum_{Z\subseteq Y} \{\{d_Z(x,y)\}\}_L\leq \sum_{Z\subseteq Y} \{\{d_Z(x,y)\}\}_{L_0}\leq C \sum_{Z\subseteq Y} \{\{d_Z(x,y)\}\}_L+C.$$
\end{lemma}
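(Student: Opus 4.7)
The first inequality is immediate from unwinding the definition of $\{\{\cdot\}\}_L$. For each subsurface $Z\subseteq Y$, either $d_Z(x,y)\geq L$, in which case $\{\{d_Z(x,y)\}\}_L=d_Z(x,y)=\{\{d_Z(x,y)\}\}_{L_0}$ because $L\geq L_0$, or $\{\{d_Z(x,y)\}\}_L=0\leq\{\{d_Z(x,y)\}\}_{L_0}$. Summing over $Z\subseteq Y$ yields the left inequality with constant $1$.

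For the second inequality, my plan is to interpret both sides, via the Distance Formula applied to the surface $Y$ itself rather than to the ambient $S$, as being comparable to a single quantity -- the marking-complex distance between restriction markings of $x,y$ on $Y$ -- and hence comparable to each other. Concretely, I would choose markings $\mu_x,\mu_y\in\calM(Y)$ whose subsurface projection to each $Z\subseteq Y$ agrees up to bounded error with $\pi_Z(x),\pi_Z(y)$; such markings arise by restricting $x,y$ to curves and arcs contained in $Y$, and by standard Masur--Minsky bookkeeping they satisfy $d_Z(\mu_x,\mu_y)=d_Z(x,y)+O(1)$ for every $Z\subseteq Y$. Applying the Distance Formula for $\calM(Y)$ with both thresholds $L$ and $L_0$ (each at least the Distance Formula constant) gives
$$\sum_{Z\subseteq Y}\{\{d_Z(\mu_x,\mu_y)\}\}_L\;\approx\;d_{\calM(Y)}(\mu_x,\mu_y)\;\approx\;\sum_{Z\subseteq Y}\{\{d_Z(\mu_x,\mu_y)\}\}_{L_0},$$
with multiplicative and additive constants depending only on $L,L_0$ and on $S$. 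Chaining the two approximations, and absorbing the $O(1)$ error from the previous step into the thresholds and constants, yields the required upper bound comparing $\sum_{Z\subseteq Y}\{\{d_Z(x,y)\}\}_L$ and $\sum_{Z\subseteq Y}\{\{d_Z(x,y)\}\}_{L_0}$.

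The main technical obstacle is handling the sporadic low-complexity cases for $Y$, where the marking complex is either trivial or must be replaced by the curve graph: if $Y$ is an annulus the only term in the sum corresponds to $Z=Y$, so both inequalities hold trivially with constant $1$; if $Y$ is a four-holed sphere or one-holed torus then $\calM(Y)$ is identified with the corresponding curve/Farey graph and only annular subsurfaces of $Y$ contribute, but the relevant variant of the Distance Formula still applies and the argument carries through verbatim. A secondary technicality is checking that the constants depend only on the topology of $S$ (and on the thresholds), which follows from the uniform nature of the construction of restriction markings and of Masur--Minsky subsurface projections.
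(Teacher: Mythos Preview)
Your argument is correct, but it takes a genuinely different route from the paper's. The paper proves the second inequality by a direct counting argument: it bounds the number of subsurfaces $Z\subseteq Y$ with $d_Z(x,y)\in[L_0,L)$ by induction on complexity, using the Bounded Geodesic Image Theorem at each step to show that every such $Z$ is contained in one of a controlled number of codimension-one subsurfaces, and obtains a bound of the form $p(L)\bigl(1+\sum_{Z\subseteq Y}\{\{d_Z(x,y)\}\}_L\bigr)$ for some polynomial $p$. Your approach instead passes to restriction markings $\mu_x,\mu_y\in\calM(Y)$ and applies the Distance Formula for $\calM(Y)$ with both thresholds; this is precisely what the paper alludes to when it says the lemma ``is implicit in the proof of the Distance Formula'' but then deliberately avoids, preferring an argument that uses only the Bounded Geodesic Image Theorem and no further hierarchy machinery. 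Your route is shorter and more conceptual; the paper's is more self-contained and yields an explicit combinatorial bound.

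One small point worth tightening: you are implicitly invoking the Distance Formula on $\calM(Y)$ for every proper $Y\subseteq S$, so you need the threshold $L_0$ (and hence $L$) to be at least the Distance Formula threshold for each such $Y$. This is harmless since there are only finitely many topological types of subsurfaces of $S$, but you should say so explicitly, and check that this is compatible with how $L_0$ has already been fixed in the surrounding text.
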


This fact is implicit in the proof of the Distance Formula (as are a few facts that will appear in the proofs below). However, in order to make the proofs we give accessible to more readers, we will rely as little as possible on the machinery of hierarchies and use the Distance Formula instead.

\begin{proof}
 The first inequality is obvious. In order to show the second one we would like to bound the number of subsurfaces $Z\subseteq Y$ so that $d_Z(x,y)\in [L_0,L)$. We proceed inductively on complexity. There is at most one such $Z$ of complexity $\xi(Z)=\xi(Y)$, i.e. $Z=Y$. Suppose we are given inductively subsurfaces $\calZ_k=\{Z_1,\dots Z_{i(k)},Z'_1,\dots, Z'_{j(k)}\}$ of complexity $\xi(Y)-k$ so that $d_{Z_i}(x,y)< L$, $d_{Z'_j}(x,y)\geq L$ and so that any subsurface $Z$ of positive complexity at most $\xi(Y)-k$ with $d_Z(x,y)\geq L_0$ is contained in some $Z_i$ or in some $Z'_i$.
Let $\hat{Z}\in \calZ_k$. Choose a geodesic $\gamma$ from $\pi_{\hat{Z}}(x)$ to $\pi_{\hat{Z}}(y)$ and choose at most $2d_{\hat{Z}}(x,y)+2$ subsurfaces $\{Z''_i\}$ so that for every vertex $v\in\gamma$ any component of $\hat{Z}\backslash v$ is contained in one such subsurface. By the Bounded Geodesic Image Theorem (see also Remark \ref{bgi}), any subsurface $Z'$ of positive complexity lower than that of $Y$ and such that $d_{Z'}(x,y)\geq L_0$ is contained in some $Z''_i$. We then get a bound of the form
$$|i(k+1)|\leq |i(k)|(2L+2)+4\sum_j d_{Z'_j}(x,y),$$
where for convenience we used that $d_{Z'_j}(x,y)\geq 1$ and hence $2d_{Z'_j}(x,y)+2\leq 4d_{Z'_j}(x,y)$.
It is also easy to bound the number of annuli $A\subseteq Y$ with $d_A(x,y)\in[L_0,L)$, again using Remark \ref{bgi}.
Proceeding inductively one shows that the number of subsurfaces $Z\subseteq Y$ so that $d_Z(x,y)\in [L_0,L)$ is at most
$$p(L)+p(L)\sum_{Z\subseteq Y} \{\{d_Z(x,y)\}\}_L,$$
for an appropriate polynomial $p(x)$ (notice that each $Z$ contributing a non-zero term to the sum appears at most once in the inductive procedure).
\end{proof}

For $x,y\in\calM(S)$, we denote by $[x,y]$ a hierarchy path joining them.

\begin{prop}
\label{divmcg}
Let $S$ be a surface of complexity at least 2.
 There is a constant $C=C(S)$ so that the following holds. Let $D\geq 1$ and let $x_1,x_2\in \calM(S)$ be a $D$-bounded pair. Then for any path $\alpha$ of length at least $1$ from $x_1$ to $x_2$ and any $p\in [x_1,x_2]$ we have
$$d(p,\alpha)\leq C \sqrt{D l(\alpha)}.$$
\end{prop}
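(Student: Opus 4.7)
The claim $d(p,\alpha)\leq C\sqrt{D\,l(\alpha)}$ is equivalent, modulo constants, to showing $l(\alpha)\geq c\,d^2/D$ where $d:=d(p,\alpha)$; this is what I would prove. The argument is in the spirit of the slab-crossing proofs in \cite{KaLe-grphquad,DMS-div}, but is driven by the Distance Formula and the Bounded Geodesic Image Theorem together with the $D$-bounded hypothesis. The case of a geodesic in the word metric follows from the hierarchy case since word-geodesics and hierarchy paths uniformly fellow-travel by the Distance Formula, so I focus on $[x_1,x_2]$ being a hierarchy path.

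\textbf{Step 1 (Reduction and localization.)} Pick $a,b\in[x_1,x_2]$ with $p$ between them and $d(a,p)\approx d(p,b)\approx d/2$; since hierarchy paths are quasi-geodesics with uniform constants $K_h$, we get $d(a,b)\geq d/K_h$. Using that subsurface projections are coarsely monotone along hierarchy paths, together with Lemma~\ref{threshold} to switch thresholds, the pair $(a,b)$ inherits $D$-boundedness up to a multiplicative constant: for every proper $Y\subseteq S$, $\sum_{Z\subseteq Y}\{\{d_Z(a,b)\}\}_{L_0}\leq C'D$.

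\textbf{Step 2 ($\calC(S)$-lower bound.)} I claim $d_S(a,b)\geq c_1 d/D$. Apply the Distance Formula to $d(a,b)$ and separate $Y=S$ from proper subsurfaces. For each proper $Y$ with $d_Y(a,b)\geq L$, the Bounded Geodesic Image Theorem (in the form of Remark~\ref{bgi}) places $\partial Y$ disjoint from some vertex of the $\calC(S)$-geodesic $\gamma$ from $\pi_S(a)$ to $\pi_S(b)$; assign each such $Y$ to a unique vertex $v(Y)$ of $\gamma$. Summing vertex-by-vertex and using the transferred $D$-boundedness of $(a,b)$ (which controls $\sum_{Z\subseteq S\setminus v}\{\{d_Z(a,b)\}\}_L$ by $O(D)$), we obtain
$$\sum_{Y\subsetneq S}\{\{d_Y(a,b)\}\}_L\leq O\bigl(D\cdot d_S(a,b)+D\bigr).$$
Plugging into the Distance Formula and using $d(a,b)\geq d/K_h$ gives the desired linear bound on $d_S(a,b)$.

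\textbf{Step 3 (Length accounting, the crux.)} Each vertex $v$ of $\gamma$ will contribute $\Omega(d)$ to $l(\alpha)$, giving $l(\alpha)\geq \Omega(d\cdot d_S(a,b))\geq \Omega(d^2/D)$. To make this precise, parameterize $\alpha$ and, for each vertex $v$ of $\gamma$ lying within $\delta$ of $\pi_S(p)$ in $\calC(S)$, consider a time $t_v$ at which $\pi_S(\alpha(t_v))$ is $\calC(S)$-close to $v$ (such $t_v$ exists by hyperbolicity of $\calC(S)$ and the fact that $\pi_S(\alpha)$ joins $\pi_S(x_1)$ to $\pi_S(x_2)$, either fellow-travelling $\gamma$ or paying exponentially in its length to detour). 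Since $d(\alpha(t_v),p)\geq d$ but $d_S(\alpha(t_v),p)=O(\delta)$, the Distance Formula forces $\sum_{Y\subsetneq S}\{\{d_Y(\alpha(t_v),p)\}\}_L\geq c_2 d$; by BGI the contributing $Y$'s lie in (or are annuli around) $S\setminus v$. Since $\pi_Y$ is coarsely Lipschitz, the $\calC(Y)$-length of $\pi_Y(\alpha)$ around time $t_v$ must absorb at least $c_3 d_Y(\alpha(t_v),p)$, and summing these costs at $v$ translates (via the Distance Formula applied to short subpaths of $\alpha$ near $t_v$) into a lower bound of $\Omega(d)$ on the amount of $l(\alpha)$ ``charged'' to the vertex $v$. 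Summing over the $\Omega(d/D)$ vertices near $\pi_S(p)$ yields $l(\alpha)\geq \Omega(d^2/D)$.

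\textbf{Main obstacle.} The hard part is Step~3: making the per-vertex length accounting rigorous while avoiding double-counting of length across different vertices $v$, and handling the annular projections around each $v$ on the same footing as the non-annular contributions. The key point to ensure is that the ``charges'' at distinct vertices come from disjoint time-intervals of $\alpha$, which should follow from the coarse monotonicity of $\pi_S(\alpha)$'s excursions relative to $\gamma$ combined with hyperbolicity of $\calC(S)$ (paths that revisit a $\calC(S)$-neighborhood of $v$ must pay extra length, again helping the bound).
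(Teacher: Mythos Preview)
Your Steps~1--2 match the paper's argument closely (Step~2 is precisely the paper's Lemma~\ref{fewlargedom}). Step~3, however, has a genuine gap. You charge $\Omega(d)$ to each vertex $v$ of $\gamma$ lying within $\delta$ of $\pi_S(p)$ and then sum over ``the $\Omega(d/D)$ vertices near $\pi_S(p)$''---but there are only $O(1)$ vertices of $\gamma$ within $\delta$ of $\pi_S(p)$ in $\calC(S)$; the $\Omega(d/D)$ count from Step~2 refers to \emph{all} vertices of $\gamma$, most of which are far from $\pi_S(p)$. Your per-vertex argument genuinely needs $v$ close to $\pi_S(p)$: the key assertion $d_S(\alpha(t_v),p)=O(\delta)$ fails otherwise, so the Distance Formula no longer localizes the large contributions to subsurfaces of $S\setminus v$. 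As written, the argument yields only $l(\alpha)\geq \Omega(d)$.

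The fix, and this is what the paper does, is to compare $\alpha(t_v)$ not to the fixed point $p$ but to the \emph{moving} hierarchy point $y_v\in[a,b]$ with $\pi_S(y_v)$ near $v$; one still has $d(\alpha(t_v),y_v)\geq d/4$ (since $y_v$ is within $3d/4$ of $p$), and now $d_S(\alpha(t_v),y_v)=O(1)$ holds for \emph{every} vertex $v$ on $\gamma$. The paper then sidesteps your double-counting obstacle entirely by directly proving $d_{\calM(S)}(q_i,q_{i+1})\geq d/C$ for consecutive $q_i=\alpha(t_{v_i})$: the subsurface contributions to $d(q_i,y_i)$ transfer first to $d(q_i,y_{i+1})$ via a threshold-shifted triangle inequality (using that $d_{\calM(S)}(y_i,y_{i+1})=O(D)$, which one may assume is $\ll d$), and then to $d(q_i,q_{i+1})$ via BGI applied to the $\calC(S)$-geodesic from $\pi_S(q_{i+1})$ to $\pi_S(y_{i+1})$, which by hyperbolicity stays far from the relevant $\partial Y$'s. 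Since the $q_i$ appear in order along $\alpha$, summing gives $l(\alpha)\geq (n-1)d/C\geq d^2/(C'D)$ without any charging or disjointness bookkeeping.
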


The argument below can be somewhat simplified if one only wants to reprove quadratic divergence, as in this case one can use $D$ as a threshold in the distance formula. We cannot do this because the error terms in the distance formula are not linear in the threshold.

\begin{proof}
We will denote by $C_i$ suitable large enough constants, depending on $S$ only. We fix $L_0$ as above and set $L=3L_0$.

Let $x_1,x_2,\alpha$ be as in the statement and let $p\in[x_1,x_2]$ be a point maximising the distance from $\alpha$. Set $d=d(p,\alpha)$, which we can safely assume to be larger than $1000\delta$, where $\delta$ is the maximum of the hyperbolicity constants of all curve complexes of subsurfaces of $S$. Consider a subpath $[x'_1,x'_2]$ of $[x_1,x_2]$ with the property that $d(x'_i,p)\in [d/2,3d/4]$, so that in particular $d(x'_i,\alpha)\in [d/4,d]$. Also, consider $p_i\in\alpha$ so that $d(x'_i,p_i)\leq d$. Denote $\alpha'$ the subpath of $\alpha$ from $p_1$ to $p_2$. Pick a maximal collection of points $y_1,\dots, y_n$ on $[x'_1,x'_2]$ so that $d_S(y_i,y_{i+1})\geq C_0\geq 100\delta$.
Notice that any pair of points $p,q$ on $[x_1,x_2]$ is $C_1D$ bounded because for each $Y\subseteq S$ we have $d_Y(p,q)\leq d_Y(x,y)+C_2$ (a standard property of hierarchy paths), and in view of Lemma \ref{threshold}.

 We claim that we have 
$$n\geq d/(C_3D).$$
This follows from the lemma below, which we record for later purposes.

\begin{lemma}
\label{fewlargedom}
 There exists $C$ with the following property. Let $p,q\in \calM(S)$ be an $E$-bounded pair for some $E\geq 1$. 
Then
$$d_{\calM(S)}(p,q)\leq C E d_S(p,q).$$ 
\end{lemma}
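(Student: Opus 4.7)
\medskip
\noindent\emph{Proof plan for Lemma \ref{fewlargedom}.} The plan is to apply the Distance Formula at threshold $L = 3L_0$ and organise the sum over subsurfaces by associating each proper subsurface with a vertex on a geodesic in $\calC(S)$, using the Bounded Geodesic Image Theorem. Fix a geodesic $\gamma \subset \calC(S)$ from $\pi_S(p)$ to $\pi_S(q)$; it has $d_S(p,q)+1$ vertices. The Distance Formula yields
$$d_{\calM(S)}(p,q) \lesssim d_S(p,q) + \sum_{Y \subsetneq S} \{\{d_Y(p,q)\}\}_L,$$
so matters reduce to bounding the sum over proper subsurfaces.

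For any proper $Y$ with $d_Y(p,q) \geq L$, Remark \ref{bgi} produces a vertex $v_Y \in \gamma$ such that either $Y \subseteq S \setminus v_Y$ or $Y = A_{v_Y}$, the annulus around $v_Y$ (this is where I use that $L_0$ was chosen larger than the BGI constant). Grouping the sum by $v_Y$ gives
$$\sum_{Y \subsetneq S} \{\{d_Y(p,q)\}\}_L \;\leq\; \sum_{v \in \gamma} \left( \sum_{Y \subseteq S \setminus v} \{\{d_Y(p,q)\}\}_L + \{\{d_{A_v}(p,q)\}\}_L \right).$$

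For each fixed $v$, the components of $S \setminus v$ consist of at most two proper subsurfaces $Y_1(v), Y_2(v)$ of $S$, and $A_v$ is itself a proper subsurface. Applying the $E$-bounded hypothesis (whose threshold $3L_0$ coincides with $L$) to each of $Y_1(v)$, $Y_2(v)$, and $A_v$ bounds the contribution of a single vertex by $3E$. Summing over the $d_S(p,q)+1$ vertices of $\gamma$ bounds the proper-subsurface sum by $3E(d_S(p,q)+1)$. Combining with the $S$-term and absorbing the additive Distance Formula constant using $E \geq 1$ yields $d_{\calM(S)}(p,q) \leq CE\, d_S(p,q)$, valid in the regime $d_S(p,q) \geq 1$ that is used in the proof of Proposition \ref{divmcg}.

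The only delicate step is the regrouping via Remark \ref{bgi}: the fact that a single vertex on $\gamma$ suffices to ``account for'' every proper $Y$ appearing in the distance formula sum. Once that is in place, everything else is short book-keeping with the Distance Formula and the definition of an $E$-bounded pair; no new geometric input is needed, and the argument does not require the full hierarchy machinery.
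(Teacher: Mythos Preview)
Your argument is correct and is essentially the same as the paper's: both use the Bounded Geodesic Image Theorem to cover every proper $Y$ with $d_Y(p,q)\geq L$ by boundedly many proper subsurfaces per vertex of a $\calC(S)$-geodesic, then invoke $E$-boundedness on each and plug into the Distance Formula. The only difference is that the paper packages the covering subsurfaces abstractly as ``$C_4 d_S(p,q)$ subsurfaces $Y_i$'', whereas you spell them out explicitly as the components of $S\setminus v$ together with the annulus $A_v$; your caveat about $d_S(p,q)\geq 1$ is appropriate and matches how the lemma is actually applied in Proposition~\ref{divmcg}.
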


\emph{Proof.} By the Bounded Geodesic Image Theorem (see also Remark \ref{bgi}) one can find $C_4d_S(p,q)$ subsurfaces $Y_i\subsetneq S$ so that any subsurface $Y\subsetneq S$ with $d_Y(p,q)>L$ is contained in some $Y_i$, as for each such subsurface (a component of) $\partial Y_i$ has to be contained in any given geodesic from $\pi_S(p)$ to $\pi_S(q)$. Using the Distance Formula we get
\[
 \pushQED{\qed}
d_{\calM(S)}(p,q)\leq C_5 d_S(p,q)+C_5\sum_i\sum_{Y\subseteq Y_i} \{\{d_Y(p,q)\}\}_{L}\leq C_6d_S(p,q) E.\qedhere
 \popQED
\]

There are points $q_1,\dots,q_n$ on $\alpha$ so that the closest point projection of $\pi_S(q_i)$ on a geodesic $\gamma$ in $\calC(S)$ from $\pi_{S}(x'_1)$ to $\pi_S(x'_2)$ is within bounded error $\pi_S(y_i)$ (and which appear in the given order along $\alpha$).

\begin{figure}[h]
 \includegraphics[scale=0.7]{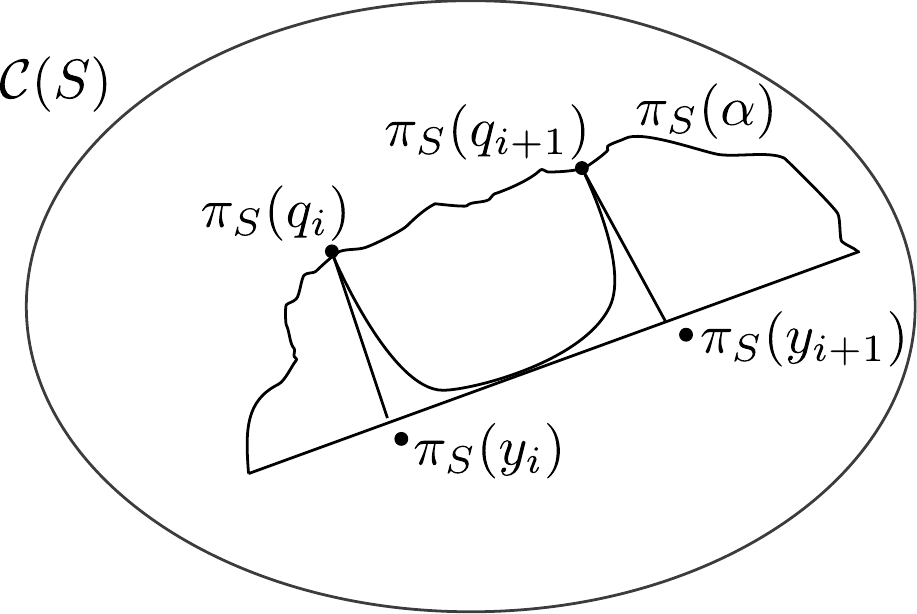}
\caption{The situation as seen from the curve complex.}
\end{figure}

We will now show that $d_{\calM(S)}(q_i,q_{i+1})$ can be bounded from below linearly in $d$. At the end of the proof we will just combine such lower bound with the lower bound on $n$.

Let $Y_1,\dots, Y_k$ be subsurfaces so that
$$\sum_j \{\{d_{Y_j}(q_i,y_i)\}\}_{3L} \geq d/C_7,$$
and all terms in the sum are positive. Such subsurfaces exist by the Distance Formula, as $d_{\calM(S)}(q_i,y_i)\geq d/4$. (Notice that we set the threshold to $3L$.)
By Lemma \ref{fewlargedom}, we know that $d_{\calM(S)}(y_i,y_{i+1})$ is bounded linearly in $D$.

Once again by the Distance Formula we have
$$\sum_j\{\{d_{Y_j}(y_i,y_{i+1})\}\}_{L}\leq C_{8}D.$$
Notice that if $l(\alpha)\leq \sqrt{D}$ then $d(p,x_1)$ is $O(\sqrt{D})$, so that we can assume $l(\alpha)\geq \sqrt{D}$. Hence, we can also assume that $d$ is larger than, say, $100C_7C_{8}D$. We then claim that
$$\sum_j \{\{d_{Y_j}(q_i,y_{i+1})\}\}_{2L} \geq d/C_{9}.$$
(Notice that we lowered the threshold.)
This follows combining the facts that for each $j$ we have
$$\{\{d_{Y_j}(q_i,y_{i+1})\}\}_{2L}\geq \{\{d_{Y_j}(q_i,y_{i})\}\}_{3L}-\{\{d_{Y_j}(y_i,y_{i+1})\}\}_{L}-2L$$
and
$$\{\{d_{Y_j}(q_i,y_{i})\}\}_{3L}-2L\geq \frac{1}{3} \{\{d_{Y_j}(q_i,y_{i})\}\}_{3L},$$
as the threshold is attained for each $j$ by hypothesis.

By hyperbolicity of $\calC(S)$, geodesics from $\pi_S(q_{i+1})$ to $\pi_S(y_{i+1})$ stay far from geodesics from $\pi_S(q_i)$ to $\pi_S(y_i)$, and hence $d_{Y_j}(q_{i+1},y_{i+1})$ can be bounded by the Bounded Geodesic Image Theorem for $Y_j\subsetneq S$. On the other hand, if for some $j$ we have $Y_j=S$ then $d_{Y_j}(q_i,y_{i+1})\leq d_{Y_j}(q_i,q_{i+1})/C_{10}$ because any geodesic from $\pi_S(q_i)$ to $\pi_S(q_{i+1})$ passes close to $\pi_S(y_i), \pi_S(y_{i+1})$. In particular 
$$\sum_j \{\{d_{Y_j}(q_i,q_{i+1})\}\}_{L} \geq d/C_{11}.$$
and we can therefore use the Distance Formula to give the lower bound
$$d_{\calM(S)}(q_{i},q_{i+1})\geq d/C_{12}.$$
Thus, we get
$$l(\alpha)\geq \sum d(q_i,q_{i+1})\geq (n-1)d/C_{12}\geq d^2/(C_{13} D),$$
the inequality we were looking for.
\end{proof}

A standard argument now gives the following.

\begin{cor}
\label{Morse}
 Fix the notation of Proposition \ref{divmcg} and assume furthermore that $\alpha$ is a geodesic in a given word metric. Then (up to increasing $C$)
$$d_{Haus}(\alpha,[x_1,x_2])\leq C\sqrt{Dd(x_1,x_2)}.$$
In particular, Proposition \ref{divmcg} still holds if $[x_1,x_2]$ denotes a geodesic in a given word metric rather than a hierarchy path.
\end{cor}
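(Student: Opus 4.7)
Setting $R := C\sqrt{D\,d(x_1,x_2)}$ and using that $\alpha$ is a geodesic (so $l(\alpha) = d(x_1,x_2)$), Proposition \ref{divmcg} already delivers one direction of the Hausdorff bound: every point of the hierarchy path $[x_1,x_2]$ lies within $R$ of $\alpha$. The task is to establish the reverse inclusion, and then to derive the ``in particular'' clause via a short triangle-inequality chaining.

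For the reverse direction, parametrise $\alpha$ discretely as $\alpha(0)=x_1,\dots,\alpha(l)=x_2$ with $l=d(x_1,x_2)$, and list the vertices of the hierarchy path as $p_0=x_1,p_1,\dots,p_N=x_2$ with $d(p_i,p_{i+1})\leq 1$. Proposition \ref{divmcg} furnishes, for each $i$, some index $\phi(i)\in\{0,\dots,l\}$ with $d(p_i,\alpha(\phi(i)))\leq R$; take $\phi(0)=0$ and $\phi(N)=l$. Since $\alpha$ is a genuine geodesic, distances along $\alpha$ equal parameter differences, so
\[
|\phi(i+1)-\phi(i)| \;=\; d(\alpha(\phi(i)),\alpha(\phi(i+1))) \;\leq\; d(p_i,\alpha(\phi(i))) + 1 + d(p_{i+1},\alpha(\phi(i+1))) \;\leq\; 2R+1.
\]
Thus $\phi(0),\dots,\phi(N)$ traverses $\{0,\dots,l\}$ from $0$ to $l$ in jumps of size at most $2R+1$, so every $t_0\in\{0,\dots,l\}$ lies within $2R+1$ of some $\phi(i)$. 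For $q=\alpha(t_0)\in\alpha$, picking such an $i$ gives $d(q,p_i)\leq (2R+1)+R=3R+1$, which is at most $C'\sqrt{D\,d(x_1,x_2)}$ after absorbing the additive constant. Together with the forward direction this yields the Hausdorff bound with $[x_1,x_2]$ the hierarchy path.

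The ``in particular'' clause (and the Hausdorff bound when $[x_1,x_2]$ denotes a word-metric geodesic) then follows by chaining. Given any path $\alpha'$ of length $\geq 1$ from $x_1$ to $x_2$ and any $p$ on a fixed word-metric geodesic $[x_1,x_2]_{\mathrm{geo}}$: apply the Hausdorff bound just established, with $\alpha=[x_1,x_2]_{\mathrm{geo}}$, to find some $p'$ on the hierarchy path with $d(p,p')\leq C'\sqrt{D\,d(x_1,x_2)}$; then apply Proposition \ref{divmcg} to $p'$ to find $q\in\alpha'$ with $d(p',q)\leq C\sqrt{D\,l(\alpha')}$. Since $l(\alpha')\geq d(x_1,x_2)$, both terms collapse into $O(\sqrt{D\,l(\alpha')})$. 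A symmetric application of the same chaining gives the reverse inclusion when $\alpha'$ is itself a geodesic.

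There is no substantive obstacle here beyond what Proposition \ref{divmcg} has already overcome; the corollary is a formal upgrade from a one-sided bound to a two-sided Hausdorff bound. What makes the upgrade work is precisely that $\alpha$ is a true geodesic, so parameter differences along $\alpha$ coincide with distances, enabling the Lipschitz-type estimate on $\phi$ used above.
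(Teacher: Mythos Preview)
Your proof is correct. The forward direction is exactly as in the paper, and your reverse direction---tracking a near-projection $\phi$ from the hierarchy path into $\alpha$ and using that consecutive values of $\phi$ differ by at most $2R+1$ to get a discrete intermediate-value statement---is a valid way to close the Hausdorff bound. The ``in particular'' clause is handled correctly by chaining through the hierarchy path.

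The paper argues the reverse direction slightly differently: given $q\in\alpha$, it splits $\alpha$ at $q$ into $\alpha_1,\alpha_2$, observes that every point of $[x_1,x_2]$ is $A$-close to one of the two pieces, and uses connectedness of $[x_1,x_2]$ to find a point $p$ that is $A$-close to both; then $q$ lies on a subgeodesic of $\alpha$ of length $\le 2A$ with endpoints $A$-close to $[x_1,x_2]$, so $d(q,[x_1,x_2])\le 2A$. This is the dual of your argument (connectedness of $[x_1,x_2]$ versus your near-Lipschitz map into $\alpha$), and is marginally shorter since it avoids discretising both paths. Both are standard ways to upgrade a one-sided Morse-type bound to a Hausdorff bound; neither buys anything the other does not.
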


\begin{proof}
 The fact that any point on $[x_1,x_2]$ is close to $\alpha$ is the content of the proposition. Set $A=C\sqrt{Dd(x_1,x_2)}$. Any $q\in\alpha$ splits $\alpha$ into two subgeodesics $\alpha_1,\alpha_2$. All points on $[x_1,x_2]$ are $A$-close to either $\alpha_1$ or $\alpha_2$, so that there is a point $p\in[x_1,x_2]$ which is $A$-close to both (this is true up to bounded error if $[x_1,x_2]$ is regarded as a discrete path). Hence, $q$ is contained in a subgeodesic of length at most $2A$ whose endpoints are $A$-close to $[x_1,x_2]$. In particular, $q$ is $2A$-close to $[x_1,x_2]$.
\end{proof}

\subsection{Proof of Theorem \ref{trkmcg}} Fix from now on the notation of Theorem \ref{trkmcg}. We now show that pairs $1,X_n$ are expected to be $O(\log(n))$-bounded.
\begin{lemma}
\label{smallprojmcg}
 For each $k\geq 1$ there exists $C_0$ so that, for each $n\geq 2$,
$$\matP\left[1,X_n\mathrm{\ is\ not\ }C_0\log(n)\mathrm{-bounded} \right]\leq C_0n^{-k}.$$
\end{lemma}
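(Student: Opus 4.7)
The plan is to mirror the proof of Lemma \ref{smallproj}, with proper subsurfaces $Y\subsetneq S$ playing the role of peripheral sets and Theorem \ref{Behin} (Behrstock's inequality) playing the role of Corollary \ref{B-type}. Writing $\sigma_Y(x,y):=\sum_{Z\subseteq Y}\{\{d_Z(x,y)\}\}_{3L_0}$, the goal is to establish two ingredients: (a) a per-subsurface exponential tail, and (b) a polynomial bound on the number of proper $Y$ that can contribute; a union bound then produces the claim.

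For (b), by Remark \ref{bgi} any subsurface $Z$ with $d_Z(1,X_n)\ge 3L_0$ has $\partial Z$ lying close to some vertex of a $\calC(S)$-geodesic from $\pi_S(1)$ to $\pi_S(X_n)$, whose length is at most $Tn$ for $T=\max\{d(1,sp):s\in\mathrm{supp}(\mu)\}$. Iterating BGI inside subsurfaces, exactly as in the proof of Lemma \ref{threshold}, gives a polynomial-in-$n$ bound on the number of such $Z$. Each proper $Y$ with $\sigma_Y(1,X_n)>0$ must contain such a $Z$ as an immediate child, so the number of relevant $Y$ is also polynomially bounded.

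For (a), I would follow the shuffling trick of Lemma \ref{smallproj}(1) almost verbatim. Given a word $w$ with $\sigma_Y(1,g(w))\ge l+100K_1$, let $w_1$ be the shortest prefix at which $\sigma_Y(1,g(w_1))$ exceeds $l$; the overshoot lies in $[l,l+K_1]$ because each $\pi_Z$ is coarsely Lipschitz and by Lemma \ref{threshold}. Replace the next $K_2$ letters $w_2$ by a word $w_2'$ of the same length in the stabiliser of a subsurface $Y'$ overlapping $g(w_1)^{-1}Y$ and acting with unbounded orbits on $\calC(Y')$, chosen so as to push $\pi_{Y'}(g(w_3)\cdot)$ far from $\pi_{Y'}(g(w_1)^{-1}\partial Y)$. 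Theorem \ref{Behin} then forces $d_Y(g(w_1),g(w_1 w_2' w_3))=O(1)$; iterated BGI inside $Y$ (using that the $\calC(Y)$-geodesic between the new projections becomes short, so only $O(1)$ subsurfaces $Z\subseteq Y$ can contribute to $\sigma_Y$) yields $\sigma_Y(g(w_1),g(w_1 w_2' w_3))=O(1)$ as well, hence $\sigma_Y(1,g(w_1 w_2' w_3))\in[l-K_3,l+K_3)$. The map $w\mapsto w_1 w_2' w_3$ is bounded-to-one, and the desired exponential decay follows exactly as in Lemma \ref{smallproj}(1).

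Combining (a) and (b): taking $l=C_0\log n$ in the tail gives $\matP[\sigma_Y(1,X_n)\ge C_0\log n]\le Kn^{-C_0/K}$ for each fixed $Y$; multiplying by the polynomial count from (b) and choosing $C_0$ sufficiently large in terms of $k$ produces the claimed $C_0 n^{-k}$ bound. The main obstacle is the upgrade inside (a) from controlling the single projection $d_Y$ to controlling the nested sum $\sigma_Y$: the replacement $w_2'$ must simultaneously bound all contributions $d_Z$ for $Z\subseteq Y$. Morally this works because $\sigma_Y$ is a coarse distance in the ``$Y$-part'' of $\calM(S)$, but technically it requires iterating the Behrstock/BGI estimates within $\calC(Y)$, which has no analogue in the relatively hyperbolic setting of Lemma \ref{smallproj}.
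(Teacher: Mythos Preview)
Your strategy diverges from the paper's, and the central step~(a) does not go through as written. The paper does \emph{not} run the shuffling argument of Lemma~\ref{smallproj} here; instead it applies Theorem~\ref{logtrk} to the action of $\calM(S)$ on the (hyperbolic) curve complex $\calC(S)$ to get that the projected sample path $\{\pi_S(w_i)\}$ tracks a $\calC(S)$-geodesic $\gamma$ up to $K_1\log n$, and combines this with Maher's linear progress (Theorem~\ref{linprog}) and Lemma~\ref{linsubprog}. For a fixed proper $Y$, either $\partial Y$ is far from $\gamma$ (whence every $Z\subseteq Y$ has bounded projection by BGI), or there is an index $i$ with $\pi_S(w_i)$ close to $\partial Y$; then the initial and final parts of the walk (indices outside $[i-K_2\log n,\,i+K_2\log n]$) have uniformly bounded projections to every $Z\subseteq Y$ by BGI, while the central segment has length $O(\log n)$ and $\sigma_Y$ is coarsely Lipschitz, so $\sigma_Y(1,w_n)\leq K_3\log n$. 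No union bound over $Y$ is needed: the estimate holds for every $Y$ simultaneously once the two high-probability events are assumed.

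The gap in your (a) is the passage from $d_Y=O(1)$ to $\sigma_Y=O(1)$. Behrstock's inequality, applied as you describe, only bounds $d_Y(g(w_1),g(w_1w_2'w_3))$; iterated BGI inside $Y$ then tells you that the subsurfaces $Z\subsetneq Y$ with $d_Z(g(w_1),g(w_1w_2'w_3))\ge L_0$ are all contained in $O(1)$ complementary components of short geodesics in $\calC(Y)$, but it gives no bound on the \emph{size} of those $d_Z$. Concretely, nothing prevents $g(w_2'w_3)$ from behaving like a large power of a Dehn twist or a partial pseudo-Anosov supported in some $Z\subsetneq Y$, making $\sigma_Y(g(w_1),g(w_1w_2'w_3))$ arbitrarily large while $d_Y$ stays bounded. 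A single fixed-length edit $w_2'$ cannot simultaneously kill the contributions of all $Z\subseteq Y$; one would need a recursive shuffle down the hierarchy, and it is not clear how to keep the map bounded-to-one. (The paper itself runs the shuffling argument only for the single projection $d_H$ in Lemma~\ref{smallrndprj}, not for $\sigma_Y$.)

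Your counting in (b) is also shaky: the set of proper $Y$ with $\sigma_Y(1,X_n)>0$ is not polynomially bounded in $n$, since a fixed $Z$ with $d_Z\ge 3L_0$ can sit inside infinitely many intermediate $Y$'s (e.g.\ an annulus $Z$ lies in every $Y$ whose boundary misses the core curve of $Z$). The paper's argument sidesteps this entirely by proving the bound uniformly in $Y$ rather than summing over $Y$.
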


\emph{Proof.} Fix $L_0$ as in the distance formula and larger than the constant $C$ in the Bounded Geodesic Image Theorem. For $Y\subsetneq S$ and $x,y\in \calM(S)$ we denote $\sigma_Y(x,y)=\sum_{Y'\subseteq Y} \{\{d_{Y'}(x,y)\}\}_{3L_0}$ the contribution made to the Distance Formula by subsurfaces of $Y$. Notice that $\sigma_Y$ is a coarsely Lipschitz function (in both variables). 

Let $\{w_i\}_{i\leq n}$ be a sample path. Consider a geodesic $\gamma$ from $\pi_S(1)$ to $\pi_S(w_n)$. By Theorem \ref{logtrk}, we can assume $d_{Haus}(\gamma,\{\pi_S(w_i)\}_{i\leq n})\leq K_1\log(n)$ (in $\calC(S)$). Also, we can assume that for each $i,j\leq n$ with $|i-j|\geq K_1\log(n)$ we have $d_S(w_i,w_j)\geq |i-j|/K_1$ by Lemma~\ref{linsubprog}. Both results apply in view of the linear progress in $\calC(S)$, Theorem \ref{linprog}.

Let $Y$ be any subsurface. If $d_S(\gamma, \partial Y)$ is positive then the projection of $\gamma$ on any subsurface of $Y$ is bounded by $L_0$, so assume that this is not the case. The idea is to split the random path into an initial, central and final part. The initial and final part will make bounded contribution to $\sigma_{Y}$ in view of the Bounded Geodesic Image Theorem, while the contribution of the central part can be estimated using that $\sigma_{Y}$ is coarsely Lipschitz.

 Let $p\in\gamma$ be so that $d_S(p,\partial Y)\leq 10$ and let $w_i$ be so that $d_S(p,w_i)\leq K_1\log(n)$. Consider initial and final subgeodesics $\gamma_1,\gamma_2$ of $\gamma$ at distance at least $K_1\log(n)+100$ from $p$. Then an initial subpath of the random path will be close to $\gamma_1$, while a final subpath will be close to $\gamma_2$, that it to say for $K_2$ large enough we have that if $|i-j|\geq K_2\log(n)$ then $d(\gamma_l,w_j)\leq K_1\log(n)$, where $l=1$ if $j<i$ and $l=2$ if $j>i$. The Bounded Geodesic Image Theorem implies that the diameter of the projection of the $\gamma_l$'s on any $Y'\subseteq Y$ is bounded, and likewise for all geodesics from $w_j$ to the closest point to $w_j$ in the corresponding $\gamma_l$, if $|i-j|\geq K_2\log(n)$. Thus, we can give a uniform bound on $diam(\pi_{Y'}(\{w_j\}_{0\leq j\leq i-K_2\log(n))})$ and $diam(\pi_{Y'}(\{w_j\}_{i+K_2\log(n)\leq j\leq n}))$ for any subsurface $Y'\subseteq Y$. Define $\sigma'_Y(x,y)=\sum_{Y'\subseteq Y} \{\{d_{Y'}(x,y)\}\}_{L_0}$, which is again coarsely Lipschitz. We can now make the estimate
 $$\sigma_Y(1,w_n)\leq \sigma'_Y\left(1,w_{i-K_2\log(n)}\right)+\sigma'_Y\left(w_{i-K_2\log(n)},w_{i+K_2\log(n)}\right)$$
\[
 \pushQED{\qed}
+\sigma'_Y\left(1,w_{i+K_2\log(n)}\right)\leq K_3\log(n).\qedhere
 \popQED
\]

\medskip

We can now conclude the proof of Theorem \ref{trkmcg}. Fix $n\geq 2$ (the constants $C_i$ below do not depend on $n$). Consider a sample path $\{w_i\}_{i\leq n}$ so that $1,w_n$ is $C_0\log(n)$-bounded and for each $i,j$ with $|i-j|\geq C_0\log(n)$ we have $d_S(w_i,w_j)\geq |i-j|/C_0$, see Lemma \ref{smallprojmcg} and Lemma \ref{linsubprog}. First of all, any point on $[1,w_n]$ has distance at most $C_1\sqrt{n\log(n)}$ from the sample path by Proposition \ref{divmcg}. Suppose that there is $w_i$ so that $d(w_i,[1,w_n])>C_1\sqrt{n\log(n)}$. Notice that there is a point $p$ on $[1,w_n]$ so that
$$d(p,\{w_j\}_{j<i}),d(p,\{w_j\}_{j>i})\leq C_1\sqrt{n\log(n)},$$
for example the first point along $[1,w_n]$ that is $C_1\sqrt{n\log(n)}$-close to $\{w_j\}_{j>i}$. In particular, there are $j_0<i<j_1$ so that $d(w_{j_0},w_{j_1})\leq 2C_1\sqrt{n\log(n)}$ and $d(w_{j_0},[1,w_n])\leq C_1\sqrt{n\log(n)}$. From Lemma \ref{linsubprog} one easily gets (as in the proof of Theorem \ref{logtrk}) that $j_1-j_0\leq C_2 \sqrt{n\log(n)}$. From this, it easily follows that $d(w_i,[1,w_n])\leq C_3 \sqrt{n\log(n)}$.

In view of Corollary \ref{Morse} and Lemma \ref{smallprojmcg}, the statement for geodesics follows from the one for hierarchy paths.

\section{Random projections estimate}
The reason why the proof of Lemma \ref{smallprojmcg} is different from that of Lemma \ref{smallproj} is just to illustrate two different techniques to get projection estimates for random points. We think it is worthwhile to state fact $(1)$ in Lemma \ref{smallproj} as well as its counterpart in the mapping class group as a separate lemma.

\begin{lemma}[Small Random Projections]
\label{smallrndprj}
Let $G$ be a non-trivial relatively hyperbolic group (resp. mapping class group of a surface of complexity at least 2). Consider a random walk generated by a symmetric probability measure whose finite support generates $G$. Then there exists $K$ with the following property. If $H$ is a left coset of a peripheral subgroup (resp. proper subsurface) then for all $l\geq 0$ and all $n\geq 1$ we have
  $$\matP\left[d(\pi_H(1),\pi_H(X_n))\geq l\right]\leq Ke^{-l/K}.$$
\end{lemma}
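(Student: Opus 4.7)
The plan is to directly adapt the argument already given for part $(1)$ of the proof of Lemma \ref{smallproj}. In the relatively hyperbolic case, that argument in fact already yields the bound stated here: there we produced $K'$ so that
$$\matP[d_H(1,X_n)\geq (2i+1)K']\leq (1+1/K')^{-i}\matP[d_H(1,X_n)\geq K'],$$
and rewriting $l=(2i+1)K'$ gives exponential decay in $l$ for $l\geq K'$; for $l<K'$ the bound is trivial after enlarging $K$ so that $Ke^{-1}\geq 1$.

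For the mapping class group case the strategy is the same, using Theorem \ref{Behin} in place of Corollary \ref{B-type}. The goal is again to show that for some $K_0$,
$$\matP[d_H(1,X_n)\geq l+K_0]\leq K_0\matP[d_H(1,X_n)\in [l-K_0,l+K_0)],$$
from which geometric decay follows by the same iteration as in Lemma \ref{smallproj}(1). Given a word $w$ of length $n$ with $d_H(1,g(w))\geq l+100K_1$, I would decompose $w=w_1 w_2 w_3$, where $w_1$ is the minimal initial subword with $d_H(1,g(w_1))\geq l$ (so that $d_H(1,g(w_1))\in [l,l+K_1]$ by the coarse Lipschitz property of $\pi_H$) and $w_2$ has a fixed, sufficiently large length $K_2$.

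Since $H$ is a proper subsurface, one can choose a subsurface $Z$ overlapping $g(w_1)^{-1}H$ whose stabiliser in $\calM(S)$ contains elements acting with unbounded orbits on $\calC(Z)$; for instance, $Z$ may be taken to be an annulus around a suitable curve, using powers of Dehn twists, or a positive-complexity subsurface supporting pseudo-Anosovs. Replace $w_2$ by a word $w_2'$ of length $K_2$ representing an element of a bounded neighbourhood of this stabiliser, chosen so that $\pi_Z(g(w_2')g(w_3))$ is pushed more than the Behrstock constant away from $\pi_Z(g(w_1)^{-1}\partial H)$; this is possible precisely because the stabiliser acts with unbounded orbits on $\calC(Z)$. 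Theorem \ref{Behin} applied to the overlapping pair $g(w_1)Z,H$ at the marking $g(w_1 w_2' w_3)$ then forces $d_H(g(w_1),g(w_1 w_2' w_3))$ to be bounded, so that $d_H(1,g(w_1 w_2' w_3))\in [l-K_3,l+K_3)$ for a suitable $K_3$. The map $w\mapsto w_1 w_2' w_3$ is bounded-to-one, since $w_1$ is recoverable from the output, the length of $w_2'$ is fixed, and $w_3$ is the remaining suffix; combined with the finite support and symmetry of $\mu$ (which bounds the ratios of $\mu$-measures of equal-length words), this yields the displayed inequality.

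The principal obstacle is, in the MCG case, to verify that the overlapping subsurface $Z$ with an unbounded-orbit stabiliser can be chosen uniformly in $H$ and $w_1$, and to arrange the modification $w_2\mapsto w_2'$ so that it actually pushes $\pi_Z$ by the required amount using only a bounded number $K_2$ of generators. Once these uniform choices are made, everything else is a purely cosmetic rewriting of the argument in Lemma \ref{smallproj}(1).
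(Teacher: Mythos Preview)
Your relatively hyperbolic case is fine: it is exactly the content of part (1) in Lemma \ref{smallproj}.

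For the mapping class group case your outline is correct in spirit and matches the paper's strategy, but the ``principal obstacle'' you flag is real, and you have not supplied the ingredient that closes it. The difficulty is precisely that the Behrstock Inequality requires the auxiliary subsurface to \emph{overlap} $H$, and you need this choice to be uniform so that a fixed-length word $w_2'$ suffices. Your proposal to pick $Z$ depending on $g(w_1)^{-1}H$ does not obviously give such uniformity: you need $Z$ to overlap $g(w_1)^{-1}H$, to have a stabiliser with large orbits in $\calC(Z)$, \emph{and} to lie at bounded distance from the identity (so that $d_H(g(w_1),g(w_1)\partial Z)$ is controlled). It is not clear a single finite list of candidate $Z$'s handles every possible $g(w_1)^{-1}H$, since for some choices the candidate might be nested in or disjoint from $g(w_1)^{-1}H$ rather than overlapping it.

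The paper resolves this via a result of Bestvina--Bromberg--Fujiwara: there is a finite-index subgroup $G'\leq \calM(S)$ such that any two distinct subsurfaces in the same $G'$-orbit overlap, and there are only finitely many $G'$-orbits. One then fixes, once and for all, two representatives $H_1^j,H_2^j$ from each $G'$-orbit. The replacement word is $w_2'=u_2v_2$, where $u_2$ is drawn from a fixed finite list of coset representatives so that $g(w_1u_2)\in G'$, $j$ is chosen so that $H$ lies in the $G'$-orbit of $H_1^j,H_2^j$, and $i\in\{1,2\}$ so that $g(w_1u_2)H_i^j\neq H$. Then $g(w_1u_2)H_i^j$ automatically overlaps $H$, and since the $H_i^j$ come from a fixed finite list all the required bounds (distance from $g(w_1)$, length of $v_2$ needed to push the projection) are uniform. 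This is the missing device in your argument; once it is inserted, the rest of your proof goes through exactly as you wrote it.
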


\begin{proof}
 We proved this fact within Lemma \ref{smallproj} for relatively hyperbolic groups. All the properties of projections on peripheral sets that we used in the proof have analogues for subsurface projections. In particular, the same proof almost goes through, see below, except that an extra argument is needed in the part where we used Corollary \ref{B-type}. In fact, the Behrstock Inequality for subsurface projections holds for \emph{overlapping} subsurfaces, so not for all pairs of disjoint subsurfaces. To solve this problem we can use the following fact \cite[Section 4.3]{BBF}. There exists a finite-index subgroup $G'$ of the mapping class group so that whenever two distinct subsurfaces are in the same $G'$-orbit they overlap. Notice also that there are finitely many $G'$-orbits.

In order to show the lemma, we can follow the proof of Lemma \ref{smallproj}(1) verbatim until the definition of $w_3$, where $\calH$ now denotes the collection of all proper subsurfaces of $S$ and we set $p=1$. The last part of the proof can be substituted by the following argument.

Choose two distinct subsurfaces $H^j_1,H^j_2$ from each $G'$-orbit. We claim that substituting $w_2$ by a suitable word $w'_2=u_2v_2$ of the same length we can make sure that $d_{g(w_1u_2) H^j_i}(\partial H, g(w_1w'_2w_3))$ is larger than $C$ as in the Behrstock Inequality, where 
\begin{enumerate}
 \item $u_2$ is chosen so that $w_1u_2$ represents an element of $G'$,
 \item $j$ is chosen so that $H^j_1,H^j_2$ are in the same $G'$-orbit as $H$,
 \item $i\in\{1,2\}$ is chosen so that $g(w_1u_2) H^j_i\neq H$.
\end{enumerate}

Notice that $u_2$ can be chosen from a finite list of words as $G'$ has finite index in the mapping class group, so that there is a uniform bound on the distance of $g(u_2)$ from $1$ and hence from $g(w_1)$ to $g(w_1u_2)$.
If $v_2$ represents an element in the stabiliser of $H_i$ then (up to an additive constant)
$$d_{g(w_1u_2) H_i}(\partial H,g(w_1w'_2w_3))=d(\pi_{H_i}(g(w_1u_2)^{-1}\partial H),g(v_2)\pi_{H_i}(g(w_3))),$$
so that for an appropriate choice of $v_2$ we have that $\pi_{H_i}(g(w_3))$ is far from $\pi_{H_i}(g(w_1u_2)^{-1}\partial H)$.

In particular, $d_H(g(w_1w'_2w_3),g(w_1u_2) \partial H^j_i)$ is bounded by the Behrstock Inequality. Also, $d=d_H(g(w_1u_2) \partial H^j_i, g(w_1u_2))$ can be uniformly bounded as $\pi_H(\partial H^j_i)$ coincides with the projection of a fixed marking $m^j_i$ whose set of base curves contains $\partial H^j_i$. As $\pi_H$ is coarsely Lipschitz and there are finitely many $H^j_i$'s, we can give a bound $d$. To sum up, there is a uniform bound on $d_H(g(w_1w'_2w_3),g(w_1u_2))$ and hence on $d_H(g(w_1w'_2w_3),g(w_1))$ as the word $u_2$ has bounded length. Hence $d_H(1,g(w_1w'_2w_3))\in [l-K_3,l+K_3)$, for a suitable $K_3$.

So, we constructed for any word $w$ of length $n$ so that $d_H(1,g(w))\geq l$ another word $w'=w_1w_2'w_3$ of length $n$ so that $d_H(1,g(w'))\in [l-K_3,l+K_3)$, and the map $w\mapsto w'$ is easily seen to be bounded-to-1. This then gives the desired inequality as the support of $\mu$ is finite and hence for each $s,s'$ in the support $\mu(s)/\mu(s')$ is uniformly bounded.
\end{proof}

\bibliographystyle{alpha}
\bibliography{rw}

\end{document}